\theoremstyle{plain}
\newtheorem{theorem}{Theorem}
\newtheorem{lemma}[theorem]{Lemma}
\newtheorem{draft-claim}[theorem]{Claim (draft - do not trust)}
\newtheorem{corollary}[theorem]{Corollary}
\newtheorem{observation}[theorem]{Observation}
\newtheorem{question}[theorem]{Question}
\theoremstyle{definition}
\newtheorem{definition}[theorem]{Definition}
\newcommand{\sqn}[1]{\langle #1 \rangle}   
\newcommand{\define}[1]{\emph{#1}}
\newcommand{\bitem}{\begin{itemize}}
\newcommand{\eitem}{\end{itemize}}
\newcommand{\benum}{\begin{enumerate}}
\newcommand{\eenum}{\end{enumerate}}
\newcommand{\ZZ}{\mathbb{Z}}
\newcommand{\SQ}{\mathcal{Q}}
\newcommand{\normal}{\unlhd}
\newcommand{\PI}{P^\omega}
\newcommand{\LMlt}{\mbox{LMlt}}
\newcommand{\Mlt}{\mbox{Mlt}}
\newcommand{\hcell}[1]{[#1]}
\begin{document}

\title{Products of all elements in a loop and a framework for non-associative analogues of the Hall-Paige conjecture}
\author{Kyle Pula \\
\small Department of Mathematics\\[-0.8ex]
\small University of Denver, Denver, CO, USA\\
\small \texttt{jpula@math.du.edu}
}

\date{
July 11, 2008\\
\small Mathematics Subject Classification: 05B15, 20N05
}

\maketitle

\begin{abstract}
For a finite loop $Q$, let $P (Q)$ be the set of elements that can be represented as a product containing each element of $Q$ precisely once. Motivated by the recent proof of the Hall-Paige conjecture, we prove several universal implications between the following conditions:
\begin{enumerate}
\item[(A)] $Q$ has a complete mapping, i.e. the multiplication table of $Q$ has a transversal,
\item[(B)] there is no $N \normal Q$ such that $|N|$ is odd and $Q/N \cong \ZZ_{2^m}$ for $m \geq 1$, and
\item[(C)] $P(Q)$ intersects the associator subloop of $Q$.
\end{enumerate}
We prove $(A) \implies (C)$ and $(B) \iff (C)$ and show that when $Q$ is a group, these conditions reduce to familiar statements related to the Hall-Paige conjecture (which essentially says that in groups $(B) \implies (A))$. We also establish properties of $P(Q)$, prove a generalization of the D\'enes-Hermann theorem, and present an elementary proof of a weak form of the Hall-Paige conjecture.
\end{abstract}

\section{Background}

As the present work is motivated by the search for non-associative analogues of the well-known Hall-Paige conjecture in group theory, we begin with a brief historical sketch of this topic. As the culmination of over 50 years of work by many mathematicians, the following theorem seems now to have been established.
\begin{theorem}\label{THM:HP}
If $G$ is a finite group, then the following are equivalent:
\begin{enumerate}
\item[$(1)$] $G$ has a complete mapping, i.e. the multiplication table of $G$ has a transversal,
\item[$(2)$] Sylow $2$-subgroups of $G$ are trivial or non-cyclic, and
\item[$(3)$] there is an ordering of the elements of $G$ such that $g_1 \cdots g_n = 1$.
\end{enumerate}
\end{theorem}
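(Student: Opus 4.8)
The plan is to prove the three statements equivalent by establishing the cycle $(1)\Rightarrow(3)\Rightarrow(2)\Rightarrow(1)$, in which the first two implications are classical and essentially elementary while the last carries the entire depth of the Hall--Paige conjecture. Throughout I write $\pi\colon G\to G/[G,G]$ for the abelianization map and exploit the basic observation that reordering a product of all the elements of $G$ changes it only by commutators, so that $\pi(g_{\sigma(1)}\cdots g_{\sigma(n)})$ is a single element of $G/[G,G]$, independent of the ordering $\sigma$, namely $\prod_{g\in G}\pi(g)$.

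For $(1)\Rightarrow(3)$ I would reproduce Paige's argument: a complete mapping is a bijection $\theta$ of $G$ for which $x\mapsto x\theta(x)$ is also a bijection, and one uses the resulting transversal of the Cayley table to arrange the elements of $G$ into a sequence whose product collapses to the identity. For $(3)\Rightarrow(2)$ I argue the contrapositive. If a Sylow $2$-subgroup of $G$ is cyclic and nontrivial, then Burnside's normal $p$-complement theorem yields a normal subgroup $H\normal G$ of odd order with $G/H$ a nontrivial cyclic $2$-group; hence $[G,G]\le H$ has odd order, and $G/[G,G]$ has a cyclic nontrivial Sylow $2$-subgroup and therefore a unique involution $z\neq 1$. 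Consequently the common abelianized product is
\[ \pi\Big(\prod_{g\in G} g\Big)=\Big(\prod_{h\in G/[G,G]} h\Big)^{|[G,G]|}=z^{|[G,G]|}=z\neq 1, \]
where the middle equality is the abelian generalization of Wilson's theorem and the last uses that $|[G,G]|$ is odd. Thus no ordering can give $1$, so $(3)$ fails, establishing the contrapositive.

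The implication $(2)\Rightarrow(1)$ is the hard one, and is where essentially all of the fifty years of effort resides. The strategy is a minimal-counterexample reduction to simple groups: let $G$ be of smallest order satisfying $(2)$ yet admitting no complete mapping, choose a minimal normal subgroup $N$, and analyse the cases in which $N$ is elementary abelian versus a direct product of isomorphic non-abelian simple groups, using the way complete mappings lift through group extensions together with the abelian base case (an abelian group has a complete mapping if and only if its Sylow $2$-subgroup is trivial or non-cyclic). This machinery reduces the whole conjecture to the single assertion that every non-abelian finite simple group admits a complete mapping; note that such a group automatically satisfies $(2)$, since by Feit--Thompson it has even order and a cyclic Sylow $2$-subgroup would, by Burnside, force a normal $2$-complement and contradict simplicity.

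The main obstacle is precisely this last assertion, which admits no uniform proof and instead requires the classification of finite simple groups. One must verify the existence of complete mappings class by class: for the alternating groups, for the groups of Lie type, and for the twenty-six sporadic groups. Here the arguments become genuinely non-elementary --- character-theoretic and combinatorial constructions for the Lie type groups and explicit, partly computational, constructions for the sporadic groups --- and it is the accumulation and eventual completion of these checks that constitutes the recent proof alluded to in the statement. I would therefore present $(1)\Rightarrow(3)\Rightarrow(2)$ in full, and for $(2)\Rightarrow(1)$ give the reduction to simple groups in detail while citing the classification-dependent verification for the simple groups themselves.
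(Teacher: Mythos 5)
Your proposal is correct and mirrors how the paper itself treats this theorem: the paper gives no proof of Theorem \ref{THM:HP}, presenting it instead as the assembly of Paige's $(1)\Rightarrow(3)$, the classical Burnside/abelianization argument linking $(2)$ and $(3)$ (cf.\ Vaughan-Lee and Wanless), and the classification-dependent resolution of $(2)\Rightarrow(1)$ completed by Wilcox, Bray, and Evans. Your cycle $(1)\Rightarrow(3)\Rightarrow(2)\Rightarrow(1)$, with elementary arguments for the first two implications (your Burnside-plus-Wilson computation for $(3)\Rightarrow(2)$ is sound) and the unavoidable citation of the CFSG-based verification for the last, is the same assembly in a slightly different arrangement.
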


The history of Theorem \ref{THM:HP} dates back at least to a result of Paige from 1951 in which he proves (1)~$\implies$~(3) \cite{Paige:1951hb}. In 1955 Hall and Paige proved that for all finite groups (1)~$\implies$~(2) and that for solvable, symmetric, and alternating groups the converse holds as well. They conjectured that the converse holds in all finite groups, and this claim came to be known as the Hall-Paige conjecture \cite{Hall:1955zr}.

In 1989 D\`enes and Keedwell noted that condition (2) holds in all non-solvable groups and proved that condition (3) does as well \cite{Denes:1989qp}. Combining the above results, they observed that the Hall-Paige conjecture was thus equivalent to the statement that non-solvable groups have complete mappings, and many authors have since contributed to the effort of demonstrating such complete mappings. Evans provides an excellent survey of this progress up to 1992 in \cite{Evans:1992zl} and of the more recent progress in \cite{Evans:2008gy}. Jumping ahead to the most recent developments (consult the previous references for complete details), Wilcox improved on earlier results to show that a minimal counterexample to the Hall-Paige conjecture must be simple and reduced the number of candidates to the Tits group or a sporadic simple group (some of which were already known to have complete mappings) \cite{Wilcox:2007qf}. The closing work on these efforts was completed by Bray and Evans who demonstrated complete mappings in $J_4$ and all remaining cases, respectively \cite{Bray:2008yg, Evans:2008gy}.

We would like to consider the Hall-Paige conjecture in more general varieties of loops, and this paper takes on the modest goal of making sense of conditions (1), (2), and (3) of Theorem \ref{THM:HP} in non-associative settings. While (1) translates directly, (2) and (3) present difficulties since it is not clear what a Sylow 2-subloop should be and any product in a non-associative loop requires the specification of some association.

The primary contribution of this paper is to propose natural generalizations of these conditions and establish several universal implications between them. As detailed in \S \ref{SEC:HP-condition}, we prove a number of related results including a generalization of the D\'enes-Hermann theorem and provide an elementary proof of a weak form of the Hall-Paige conjecture.

We begin with a minimal background on the theories of latin squares and loops.
\subsection{Latin squares}

A \define{latin square of order $n$} is an $n \times n$ array whose entries consist of $n$ distinct symbols such that no symbol appears as an entry twice in a row or column. More formally, a Latin square $L$ on a finite set of symbols $S$ is a subset of $S^3$ such that the projection along any pair of coordinates is a bijection. When the triple $(x,y,z) \in L$, we say that the symbol $x$ appears as a row, $y$ as a column, and $z$ as an entry.

A \define{$k$-plex} is a subset of $L$ in which each symbol appears as a row, column, and entry precisely $k$ times. A $1$-plex is called a \define{transversal} and a $k$-plex with $k$ odd is called an odd-plex. For the theory and history of $k$-plexes, consult \cite{Bryant:2009zl, Egan:2008eu, Wanless:2002rr, Wanless:2007zr}.

We introduce the following generalizations of the concepts of transversals and $k$-plexes. A \define{row $k$-plex} of $L$ is a collection of triples in $L$ representing each row precisely $k$ times. We call a subset $C = \{(x_i, y_i, z_i) : 1 \leq i \leq m \} \subseteq L$ \define{column-entry regular}, or just \define{regular} for short, if for each symbol $s$ we have $|\{i : y_i = s\}| = |\{i : z_i = s\}|$. That is, $s$ appears as an entry the same number of times it appears as a column. We denote by $C_r$ the multiset of symbols appearing as rows in $C$. For example, if $C$ is a $k$-plex, then $C_r$ contains precisely $k$ copies of each symbol. We will be primarily interested in regular row transversals, i.e. selections of a single cell from each row so that each symbol appears as a column the same number of times as an entry (Figure \ref{FIG:HP-counter-example} depicts such a selection in the multiplication table of a loop).

\subsection{Loops}

A set with a binary operation, say $(Q, \cdot)$, is a \define{loop} if for each $x,z \in Q$, the equations $x \cdot y_1 = z$ and $y_2 \cdot x=z$ have unique solutions $y_1, y_2 \in Q$ and $(Q, \cdot)$ has a neutral element (which we always denote $1$). A \define{group} is a loop in which the associativity law holds. We assume in all cases  that $Q$ is finite and typically write $Q$ rather than $(Q, \cdot)$.

The \define{left}, \define{right}, and \define{middle inner mappings} of a loop are defined as $L(x, y) = L^{-1 }_{yx} L_y L_x$, $R(x, y) = R^{-1}_{xy} R_y R_x$, and $T(x) = R^{-1}_x L_x$, respectively. A subloop $S$ of a loop $Q$ is said to be \define{normal}, written $S \normal Q$, if $S$ is invariant under all inner mappings of $Q$. A loop $Q$ is \define{simple} if it has no normal subloops except for $\{1\}$ and $Q$. This definition of normality is equivalent to what one would expect from the standard universal algebraic definition. In fact, just as with groups, loops can be defined more formally as universal algebras though we have not elected to do so here.

We write $A(Q)$ for the associator subloop of $Q$, the smallest normal subloop of $Q$ such that $Q/A(Q)$ is a group. Likewise, we write $Q'$ for the derived subloop of $Q$, the smallest normal subloop of $Q$ such that $Q/Q'$ is an Abelian group. Note that $A(Q) \normal Q'$ and thus cosets of $Q'$ are partitioned by cosets  of $A(Q)$. When $Q$ is a group, $A(Q) = \{1\}$.

The \define{multiplication table} of a loop $Q$ is the set of triples $\{(x, y, xy) : x,y \in Q\}$. Multiplication tables of loops are latin squares and, up to the reordering of rows and columns, every latin square is the multiplication table of some loop. It thus makes sense to say that a loop has a $k$-plex or more generally a regular row $k$-plex whenever its multiplication table does.

Most literature on the Hall-Paige conjecture focuses on the concept of a complete mapping of a group rather than a transversal of its multiplication table, though the two are completely equivalent \cite[p. 7]{Denes:1991lq}. In the general loop setting, we prefer the latter concept as it emphasizes the combinatorial nature of the problem and generalizes more naturally to the concepts of $k$-plexes and regular $k$-plexes.

For $H \subseteq Q$ and $k \geq 1$, let $P^k(H)$ be the set of elements in $Q$ that admit factorizations containing every element of $H$ precisely $k$ times. We call these elements \define{full $k$-products of $H$}. When $k=1$, we write just $P(H)$ and refer to its elements as \define{full products of $H$}. We work primarily in the case $H = Q$ and simply refer to these elements as \define{full $k$-products}. While in the group case, the set $P(G)$ has been well-studied (see the commentary preceding Theorem \ref{THM:denes-hermann} for some background), to the best of our knowledge, the present article contains the first investigation of the general loop case.

Although we assume a basic familiarity with both loops and latin squares, we provide references for any non-trivial results that we employ. For standard references on latin squares consult D\'enes and Keedwell \cite{Denes:1974ul, Denes:1991lq} and for loops Bruck \cite{Bruck:1958ly} and Pflugfelder \cite{Pflugfelder:1990bh}.

\section{Summary of Results}\label{SEC:HP-condition}

We propose the following conditions as fruitful interpretations of $(1)$, $(2)$, and $(3)$ of Theorem \ref{THM:HP} in varieties of loops in which associativity need not hold.

\begin{definition}[HP-condition]
We say a class of loops $\SQ$ satisfies the \define{HP-condition} if for each $Q \in \SQ$ the following are equivalent:
\begin{enumerate}
\item[(A)] $Q$ has a transversal,
\item[(B)] there is no $N \normal Q$ such that $|N|$ is odd and $Q/N \cong \ZZ_{2^m}$ for $m \geq 1$, and
\item[(C)] $A(Q)$ intersects $P(Q)$.
\end{enumerate}
\end{definition}
When $\SQ$ is the variety of groups, satisfaction of the HP-condition reduces to Theorem \ref{THM:HP}. The equivalence of (1) and (A) is clear; as is that of (3) and (C), given that when $Q$ is a group, $A(Q) = \{1\}$. We take an indirect approach to showing (2) $\iff$ (B) by showing (B) $\iff$ (C), a corollary of Theorems \ref{THM:DH-loops} and \ref{THM:pqnot-qsteptype}. In 2003, Vaughan-Lee and Wanless gave the first elementary proof of (2) $\iff$ (3). Their paper also provides some background on this result (whose initial proof invoked the Feit-Thompson theorem) \cite{Vaughan-Lee:2003cr}. As corollaries of our main results, we show that in all loops (B)~$\iff$~(C) and (A)~$\implies$~(C).

The following easy observation sets the context for our main results. It is well-known in the group case and follows in the loop case for the same simple reasons. We include it as part of Lemma \ref{LEM:pq-properties} for completeness.

\begin{observation} $P^k(Q)$ is contained in a single coset of $Q'$.
\end{observation}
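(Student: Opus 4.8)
The plan is to push the problem into the abelian group quotient $Q/Q'$, where products are canonical. First I would consider the natural projection $\pi \colon Q \to Q/Q'$. Since $Q' \normal Q$, this is a loop homomorphism onto the quotient, and by the defining property of $Q'$ the quotient $Q/Q'$ is an abelian group.

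Next I would record the (routine) fact that a loop homomorphism respects \emph{any} product, regardless of how the factors are ordered or associated: if $z$ is the value of some bracketed product of a sequence of elements, then $\pi(z)$ is the value obtained from the images of those elements under the identical bracketing. This follows by induction on the structure of the parse tree of the product, using only $\pi(ab) = \pi(a)\pi(b)$ at each internal node.

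Now let $z \in P^k(Q)$, so that $z$ is the value of some bracketed product whose multiset of factors consists of each element of $Q$ taken exactly $k$ times. Applying $\pi$, the element $\pi(z)$ is a product in $Q/Q'$ of the corresponding multiset of images. But $Q/Q'$ is an abelian group, so this product depends neither on the ordering nor on the bracketing of its factors; it equals the single fixed element $\prod_{x \in Q} \pi(x)^k$ of $Q/Q'$. Hence there is a common value $c \in Q/Q'$ with $\pi(z) = c$ for every $z \in P^k(Q)$, and therefore $P^k(Q) \subseteq \pi^{-1}(c)$, which is exactly one coset of $Q'$.

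The argument has no genuine obstacle, which is precisely why it is stated as an easy observation. The only point meriting care is the inductive claim that a loop homomorphism commutes with arbitrary non-associative, unordered products; once that is secured, the commutativity and associativity of the abelian quotient $Q/Q'$ carry out the rest of the work automatically.
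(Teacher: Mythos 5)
Your proof is correct and is essentially the paper's own argument made explicit: the paper's proof (Lemma on properties of $P^k(Q)$, part three) simply observes that any two elements of $P^k(Q)$ arise from products of the same factors differing only in order and association, hence are congruent mod $Q'$ because $Q/Q'$ is an Abelian group. Your projection $\pi\colon Q \to Q/Q'$ together with the parse-tree induction is exactly the routine detail the paper leaves implicit.
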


At least as far back as 1951, authors have asked whether, in the group case, this observation can be extended to show that $P^k(Q)$ in fact coincides with this coset. For a history of this line of investigation, see \cite[p. 35]{Denes:1974ul} and \cite[p. 40]{Denes:1991lq}. This result now bears the names of D\'enes and Hermann who first established the claim for all groups.

\begin{theorem}[D\'enes, Hermann \cite{Denes:1982qf} 1982]\label{THM:denes-hermann}
If $G$ is a group, then $P(G)$ is a coset of $G'$. It follows that $P^k(G)$ is also a coset of $G'$.
\end{theorem}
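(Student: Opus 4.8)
The plan is to fix any full product, call its value $g_0$, and prove $P(G) = g_0 G'$; the statement for $P^k(G)$ will then follow by concatenation. The inclusion $P(G)\subseteq g_0 G'$ is immediate from the Observation: since $G/G'$ is abelian, every full product has the same image $\prod_{g\in G}\bar g$ in $G/G'$ regardless of the order of multiplication, so all full products lie in a single coset of $G'$, namely $g_0 G'$.

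The reverse inclusion is where the work lies, and I would drive it by a single local move on orderings. Writing a full product as $a_1 a_2\cdots a_n$ with $\{a_1,\dots,a_n\}=G$, an adjacent transposition has a controlled effect: using $a_{i+1}a_i = a_i a_{i+1}[a_{i+1},a_i]$ (with $[x,y]=x^{-1}y^{-1}xy$) together with normality of $G'$, one computes
\[
a_1\cdots a_{i-1}(a_{i+1}a_i)a_{i+2}\cdots a_n = (a_1\cdots a_n)\cdot R^{-1}[a_{i+1},a_i]R, \qquad R = a_{i+2}\cdots a_n.
\]
Thus neighbouring orderings differ by a conjugate of a commutator; in particular, swapping the final two factors multiplies the product on the right by the commutator $[a_n,a_{n-1}]$ exactly. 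This re-proves the first inclusion and, more usefully, is the engine for the second.

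To realize an arbitrary $g_0 c$ with $c\in G'$, I would reserve several disjoint pairs of elements as independent ``switch sites'' inside a fixed background ordering of the remaining elements and compute the effect of toggling each site: toggling the site whose reserved pair is $x,y$ and whose tail-product is $T$ multiplies the current product on the right by $T^{-1}[x,y]T$. Turning sites on in increasing order of position then realizes every product of the form $g_0\, d_{j_1} d_{j_2}\cdots d_{j_r}$, where the $d_j$ are the associated conjugated commutators, and the aim is to choose the sites and background so that these products sweep out all of $g_0 G'$. This last step is the main obstacle. Since $G'$ is generated by commutators and is closed under conjugation, every element of $G'$ is in principle a product of such $d_j$; the difficulty is that each commutator we install consumes two of the $n$ available elements, whereas a full product may use each element only once, and a given target may require many commutator factors with repeated entries. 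I expect to handle this by an induction that installs one commutator at a time---bringing a chosen pair to the end via the transposition move, toggling to multiply by exactly that commutator, and folding the incidental conjugations produced while repositioning into the inductive hypothesis---so that reaching a normal generating set of $G'$ bootstraps to all of $G'$. Carrying out this bookkeeping cleanly is the technical heart of the theorem.

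Finally, the statement for $P^k(G)$ is a short corollary. By the Observation $P^k(G)$ lies in a single coset of $G'$, and concatenating $k$ copies of a fixed full product exhibits an element of $g_0^k G'$ in $P^k(G)$, so $P^k(G)\subseteq g_0^k G'$. Conversely, given a target $g_0^k d$ with $d\in G'$, set $c = g_0^{\,k-1}\, d\, g_0^{\,1-k}$, which lies in $G'$ by normality; by the case $k=1$ just proved there is a full product $w_1$ with value $g_0 c$, and taking $w_2=\cdots=w_k$ to be the full product of value $g_0$, the concatenation $w_1 w_2\cdots w_k$ is a full $k$-product whose value is $g_0 c\, g_0^{\,k-1} = g_0^k d$. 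Hence $P^k(G) = g_0^k G'$.
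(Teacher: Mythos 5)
Your proposal has a genuine gap, and it sits exactly where you flag it: the reverse inclusion $g_0 G' \subseteq P(G)$ is never actually proved, and that inclusion \emph{is} the theorem. Your transposition identity is correct, but all it shows is that the set of achievable values is closed under right multiplication by the particular conjugated commutators $R^{-1}[a_{i+1},a_i]R$ that happen to be available in the current ordering. The ``switch site'' device then yields values $g_0\,d_{j_1}\cdots d_{j_r}$ where each $d_j$ is built from a \emph{disjoint} reserved pair, so with $n$ elements you can install at most about $n/2$ commutators, each with distinct entries. An arbitrary element of $G'$, however, is a product of commutators $[x,y]$ whose entries repeat and whose number can be unbounded in terms of $n$; nothing in the sketch explains how to simulate a repeated entry when each group element may be used only once. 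The proposed repair --- an induction ``installing one commutator at a time'' while ``folding the incidental conjugations into the inductive hypothesis'' --- is precisely the missing argument: every repositioning move perturbs the value by further uncontrolled conjugates, and you give no mechanism forcing the reachable set to be closed under multiplication by a full normal generating set of $G'$ rather than by some proper subset. Declaring this ``the technical heart'' does not discharge it.

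For comparison: the paper does not prove this statement at all. It imports it as a known theorem of D\'enes and Hermann (1982) and builds its own Theorem \ref{THM:DH-loops} on top of it, so there is no in-paper argument your sketch could be matching. The historical remarks in the paper also indicate why a purely combinatorial local-moves argument is unlikely to close: the closely related equivalence $(2) \iff (3)$ of Theorem \ref{THM:HP} originally required the Feit--Thompson theorem, and an elementary proof was found only in 2003 by Vaughan-Lee and Wanless. Any correct proof must use genuine group-theoretic structure (e.g., induction through normal subgroups or Sylow-type information), and your sketch never identifies where such input would enter. The parts you do complete --- the inclusion $P(G)\subseteq g_0G'$ (essentially Lemma \ref{LEM:pq-properties}\ref{LEM:pq-prop-30} of the paper) and the reduction of the $P^k(G)$ statement to the case $k=1$ by concatenation and normality of $G'$ --- are correct, but they are the routine parts of the theorem.
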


An admittedly cumbersome but more general way to read this statement is that $P(G)$ intersects every coset of $A(G)$ that is contained in the relevant coset of $G'$. Since $A(G) = \{1\}$ and these cosets partition cosets of $G'$, this technical phrasing reduces to the theorem as stated. We extend the D\'enes-Hermann theorem to show that this more general phrasing holds in all loops. Although the result is more general, our proof of Theorem \ref{THM:DH-loops} relies essentially upon the D\'enes-Hermann theorem. In \S \ref{SEC:DenesHermann}, we discuss prospects of a further generalization.

\begin{theorem}\label{THM:DH-loops} If $P(Q) \subseteq xQ'$, then
$P(Q) \cap y A(Q) \neq \emptyset$ for all $y \in xQ'$. That is, $P(Q)$ intersects every coset of $A(Q)$ contained in $xQ'$ and, in particular, if $P(Q) \subseteq Q'$, then $P(Q)$ intersects $A(Q)$. It follows that $P^k(Q)$ also intersects every coset of $A(Q)$ in the corresponding coset of $Q'$.
\end{theorem}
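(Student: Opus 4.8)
The plan is to push the problem forward along the canonical projection onto the group $Q/A(Q)$, solve it there via the D\'enes--Hermann theorem, and pull the conclusion back. Write $G = Q/A(Q)$ and let $\pi \colon Q \to G$ be the natural projection. By the defining property of $A(Q)$, the quotient $G$ is a group, and $\pi$ is a loop homomorphism with kernel $A(Q)$; in particular each fiber $\pi^{-1}(h)$ is a coset of $A(Q)$ and has exactly $|A(Q)|$ elements.

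The central observation is that $\pi$ carries full products of $Q$ to full $|A(Q)|$-products of $G$. Indeed, a full product of $Q$ is the value of some parenthesization whose leaves are labeled bijectively by the elements of $Q$; applying $\pi$ leaf-by-leaf---valid because $\pi$ is a homomorphism---yields the same parenthesization evaluated in $G$, and its leaves now realize each $h \in G$ exactly $|\pi^{-1}(h)| = |A(Q)|$ times. Hence $\pi(P(Q)) \subseteq P^{|A(Q)|}(G)$. The reverse inclusion holds as well: given any full $|A(Q)|$-product expression for an element $g \in G$, replace the leaves labeled $h$ by the $|A(Q)|$ distinct members of $\pi^{-1}(h)$, one per occurrence. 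Since the fibers partition $Q$ into blocks of size $|A(Q)|$, this produces a parenthesization whose leaves are labeled bijectively by $Q$, i.e.\ a full product of $Q$, and it maps under $\pi$ back to $g$. Thus $P^{|A(Q)|}(G) \subseteq \pi(P(Q))$, and so $\pi(P(Q)) = P^{|A(Q)|}(G)$.

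Now invoke Theorem~\ref{THM:denes-hermann}: since $G$ is a group, $P^{|A(Q)|}(G)$ is a single coset of $G'$. By the correspondence between normal subloops of $Q$ containing $A(Q)$ and normal subloops of $G$, one has $G' = Q'/A(Q)$, so $\pi(P(Q))$ is a coset of $Q'/A(Q)$ in $G$. On the other hand $P(Q) \subseteq xQ'$ forces $\pi(P(Q)) \subseteq \pi(xQ')$, which is another coset of $Q'/A(Q)$; as the nonempty set $\pi(P(Q))$ lies in both, these cosets coincide, giving $\pi(P(Q)) = \pi(xQ')$. Finally, the cosets of $A(Q)$ contained in $xQ'$ are exactly the fibers $yA(Q)$ with $y \in xQ'$, and $yA(Q) \mapsto \pi(y)$ is a bijection from them onto $\pi(xQ')$. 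For any $y \in xQ'$ we have $\pi(y) \in \pi(xQ') = \pi(P(Q))$, so some $p \in P(Q)$ satisfies $\pi(p) = \pi(y)$, i.e.\ $p \in yA(Q) \cap P(Q)$, which is the desired nonempty intersection; taking $x = y = 1$ recovers $P(Q) \cap A(Q) \neq \emptyset$. The statement for $P^k(Q)$ is identical: the same leaf-relabeling gives $\pi(P^k(Q)) = P^{k|A(Q)|}(G)$, again a coset of $G'$ by the $P^k$ form of Theorem~\ref{THM:denes-hermann}.

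I expect the only delicate points to be bookkeeping rather than ideas: verifying the derived-subloop correspondence $(Q/A(Q))' = Q'/A(Q)$, and checking that the fiber sizes make the relabeling a bijection onto $Q$, so that the lifted parenthesization is a bona fide full product (each element of $Q$ appearing exactly once) rather than a product with the wrong multiplicities. The essential mathematical content is imported from the D\'enes--Hermann theorem---it is what upgrades the inclusion $\pi(P(Q)) \subseteq \pi(xQ')$ to equality by guaranteeing that $P^{|A(Q)|}(G)$ already fills an entire coset of $G'$. The loop-theoretic contribution is merely the homomorphism push-forward together with the elementary lifting argument, which work precisely because $Q/A(Q)$ is associative.
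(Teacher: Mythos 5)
Your proof is correct and takes essentially the same route as the paper: quotient by $A(Q)$, apply the D\'enes--Hermann theorem in the group $Q/A(Q)$ together with the identification $(Q/A(Q))' = Q'/A(Q)$ (the paper's Lemma~\ref{LEM:derived-fraction}), and lift back to $Q$ by using each element of $Q$ exactly once as a representative of its coset. The only difference is packaging: you state the lifting step as the identity $\pi(P(Q)) = P^{|A(Q)|}(Q/A(Q))$ and invoke the $P^k$ form of Theorem~\ref{THM:denes-hermann}, whereas the paper's Lemma~\ref{LEM:pq-intersects-pqnk} phrases the same lifting idea as $P(Q)$ meeting every member of $P(Q/A(Q))^{|A(Q)|}$.
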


Coupled with Theorem \ref{THM:DH-loops}, our next result establishes (B) $\iff$ (C).

\begin{theorem}\label{THM:pqnot-qsteptype}
$P(Q) \subseteq Q'$ if and only if $(B)$ holds.
\end{theorem}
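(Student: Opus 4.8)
The plan is to reduce the entire question to a computation in the abelian group $A = Q/Q'$. Let $\pi \colon Q \to Q/Q'$ be the canonical projection, which is a loop homomorphism onto the abelian group $A$. First I would pin down exactly which coset of $Q'$ contains $P(Q)$. Since $A$ is a commutative group, the image under $\pi$ of any full product of $Q$ is independent of both the order and the association of its factors, so every element of $P(Q)$ maps to the single element $\prod_{g \in Q} \pi(g) \in A$ (this simultaneously reproves the Observation). Grouping the factors by the fibers of $\pi$, each of size $|Q'|$, gives
\[
\prod_{g \in Q} \pi(g) = \Big( \prod_{a \in A} a \Big)^{|Q'|}.
\]
Consequently, since $P(Q)$ is nonempty, $P(Q) \subseteq Q'$ if and only if this element is the identity of $A$.

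The next step evaluates $\prod_{a \in A} a$ using the abelian-group form of Wilson's theorem: in a finite abelian group this product is trivial unless the Sylow $2$-subgroup is cyclic and nontrivial, in which case it equals the unique involution $t$. Combining this with the display, $P(Q) \not\subseteq Q'$ holds precisely when the Sylow $2$-subgroup of $A$ is cyclic and nontrivial and, moreover, $t^{|Q'|} \neq 1$, i.e. $|Q'|$ is odd. In short, $P(Q) \not\subseteq Q'$ if and only if the Sylow $2$-subgroup of $Q/Q'$ is cyclic and nontrivial while $|Q'|$ is odd.

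Finally I would match this criterion with the negation of $(B)$, namely the existence of $N \normal Q$ with $|N|$ odd and $Q/N \cong \ZZ_{2^m}$ for $m \geq 1$. Because $Q/N$ is abelian, any such $N$ contains $Q'$, so by the correspondence theorem for loops such $N$ correspond to subgroups $B = N/Q'$ of $A$ with $A/B \cong \ZZ_{2^m}$ and $|B|\,|Q'|$ odd. For the forward direction, oddness of $|N|$ forces both $|Q'|$ and $|B|$ odd, and $A/B$ being a nontrivial cyclic $2$-group forces the Sylow $2$-subgroup of $A$ to be $\ZZ_{2^m}$. Conversely, if $|Q'|$ is odd and the Sylow $2$-subgroup of $A$ is $\ZZ_{2^m}$ with $m \geq 1$, then writing $A = A_2 \times A_{2'}$ for its Sylow $2$-subgroup and Hall $2'$-subgroup and taking $N = \pi^{-1}(A_{2'})$ produces a normal subloop of odd order with $Q/N \cong A_2 \cong \ZZ_{2^m}$. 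This establishes the equivalence.

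The only genuinely delicate point is the first step: justifying that the image of $P(Q)$ in $Q/Q'$ is the well-defined product above regardless of how a full product is bracketed. Once associativity and commutativity in $Q/Q'$ collapse all orderings and associations to the same value, the remainder is the abelian Wilson theorem together with routine bookkeeping via the correspondence theorem and the Hall $2'$-decomposition of the finite abelian group $A$.
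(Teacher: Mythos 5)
Your proof is correct, but it reaches the result by a different mechanism than the paper. The paper runs both directions of the equivalence through Lemma \ref{LEM:pq-intersects-pqnk}, a \emph{lifting} statement ($P(Q)$ intersects every member of $P(Q/N)^{|N|}$, proved by choosing $|N|$ disjoint systems of coset representatives), applied once with $N = Q'$ and once with the $N$ witnessing the failure of (B); it then manufactures that witness in the converse direction via Burnside's normal complement theorem (Lemma \ref{LEM:burnside}). You work entirely in the \emph{pushforward} direction instead: since $\pi \colon Q \to Q/Q'$ is a homomorphism onto an abelian group, every full product maps to the same element $\bigl(\prod_{a \in A} a\bigr)^{|Q'|}$, which pins down exactly which coset of $Q'$ contains $P(Q)$ --- this is genuinely easier than the lifting lemma (no coset representatives are needed) and simultaneously reproves the Observation and the coset identification $aQ' = b^{|Q'|}Q'$ that the paper extracts from Lemma \ref{LEM:pq-intersects-pqnk}. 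You also dispense with Burnside: since $A = Q/Q'$ is abelian, the decomposition $A = A_2 \times A_{2'}$ into Sylow $2$-part and odd part hands you the odd-order normal subloop $\pi^{-1}(A_{2'})$ directly, whereas the paper invokes Burnside for a group that is abelian anyway, where it is overkill. Both arguments rest on the same classical abelian fact (your ``abelian Wilson theorem'' is the paper's Lemma \ref{LEM:hp-abelian}, sharpened to identify the nontrivial product as the unique involution) and on the correspondence and isomorphism theorems for loops. What the paper's route buys is economy of machinery across the whole paper: Lemma \ref{LEM:pq-intersects-pqnk} is needed in full lifting strength for Theorem \ref{THM:DH-loops}, where a pushforward computation cannot suffice (there one must show $P(Q)$ \emph{hits} every coset of $A(Q)$ in a coset of $Q'$, not merely locate it). What your route buys is a shorter, more elementary, self-contained proof of this particular theorem, together with an explicit formula for the coset of $Q'$ containing $P(Q)$.
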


In 1951, Paige showed that if a group $G$ has a transversal, then $1 \in P(G)$ \cite{Paige:1951hb}. We extend this result to a much wider class of structures.

\begin{theorem}\label{THM:kplex-kprod}
If $C$ is a regular subset of the multiplication table of $Q$, then $P(C_r)$ intersects $A(Q)$.
In particular, if $Q$ has a $k$-plex (or just a regular row $k$-plex), then $P^k(Q)$ intersects $A(Q)$.
\end{theorem}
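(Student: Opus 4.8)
The plan is to reduce everything to the group $G = Q/A(Q)$. Let $\varphi \colon Q \to G$ be the quotient homomorphism, so that $A(Q) = \varphi^{-1}(1)$. Since $\varphi$ is a surjective homomorphism, the image under $\varphi$ of any full product of the multiset $C_r$ is a full product of the image multiset $\varphi(C_r)$ in $G$, and conversely every full product of $\varphi(C_r)$ in $G$ is realized as the image of a full product of $C_r$ in $Q$ (carry out the same ordering and association on the chosen preimages). Hence $P(C_r)$ intersects $A(Q)$ if and only if $1 \in P_G\bigl(\varphi(C_r)\bigr)$, where $P_G$ denotes full products computed in $G$. Because $G$ is a group, association no longer matters, so the whole theorem reduces to a purely group-theoretic ordering statement.

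To set this up, write the triples of $C$ as $(x_i, y_i, z_i)$ with $z_i = x_i y_i$, and put $a_i = \varphi(x_i)$, $b_i = \varphi(y_i)$, $c_i = \varphi(z_i)$, so that $a_i = c_i b_i^{-1}$ in $G$. The first step is to show that column-entry regularity descends to $G$: for each symbol $s \in Q$ regularity gives $|\{i : y_i = s\}| = |\{i : z_i = s\}|$, and summing over each fiber $\{s : \varphi(s) = g\}$ yields $|\{i : b_i = g\}| = |\{i : c_i = g\}|$ for every $g \in G$. Thus $\{b_i\}$ and $\{c_i\}$ are equal as multisets in $G$, and we may fix a bijection $\sigma$ of the index set with $c_i = b_{\sigma(i)}$, so that $a_i = b_{\sigma(i)} b_i^{-1}$.

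The crux is then an ordering argument. Decompose $\sigma$ into disjoint cycles. For a single cycle $(i_1\, i_2 \cdots i_\ell)$ with $\sigma(i_t) = i_{t+1}$ (indices mod $\ell$), multiply the corresponding $a$'s in the reverse order:
$$a_{i_\ell} a_{i_{\ell-1}} \cdots a_{i_1} = (b_{i_1} b_{i_\ell}^{-1})(b_{i_\ell} b_{i_{\ell-1}}^{-1}) \cdots (b_{i_2} b_{i_1}^{-1}) = b_{i_1} b_{i_1}^{-1} = 1,$$
the interior factors telescoping away. Concatenating these cycle blocks over all cycles of $\sigma$ produces an ordering of the entire multiset $\{a_i\}$ whose product is $1$, so $1 \in P_G(\varphi(C_r))$ and therefore $P(C_r)$ meets $A(Q)$. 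I expect this telescoping/cycle step to be the main point; the reduction and the descent of regularity are routine, and the non-commutativity of $G$ is exactly what forces the careful choice of order (the abelian case being immediate since $\prod c_i = \prod b_i$).

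Finally, the ``in particular'' clause follows by specializing $C$. A $k$-plex, and more generally a regular row $k$-plex, is column-entry regular and represents each row exactly $k$ times, so $C_r$ consists of $k$ copies of every element of $Q$ and hence $P(C_r) = P^k(Q)$. The main statement then gives $P^k(Q) \cap A(Q) \neq \emptyset$. (Note that taking $k=1$ with $Q$ a group recovers Paige's result that a transversal forces $1 \in P(G)$, since there $A(Q) = \{1\}$.)
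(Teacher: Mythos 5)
Your proof is correct. It shares its germ with the paper's argument --- work modulo $A(Q)$ so that associativity becomes available, then use column--entry regularity to produce a telescoping product --- but the organization is genuinely different. The paper stays inside $Q$ as long as possible: Lemma~\ref{LEM:ec-regular} greedily extracts from $C$ a cycle of triples matched by \emph{exact} symbols ($y_{i_m} = z_{i_{m+1}}$ in $Q$), so that the identity $x_1(x_2(\cdots(x_s z_1)\cdots)) = z_1$ holds in $Q$ itself; Lemma~\ref{LEM:not-derangement} (an induced left translation of $Q/A(Q)$ with a fixed point is the identity) then converts this into $x_1(x_2(\cdots(x_s)\cdots)) \in A(Q)$, and one iterates on the regular remainder $C \setminus C'$, using that $A(Q)$ is closed under products. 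You instead quotient globally at the outset: regularity descends to $G = Q/A(Q)$, your matching bijection $\sigma$ (coarser than the paper's, since it may pair symbols that are merely congruent mod $A(Q)$) is decomposed into cycles all at once, each cycle telescopes to $1$ in $G$, and the resulting ordering is lifted back to $Q$ via the observation that $\varphi(P(C_r)) = P_G(\varphi(C_r))$. Your route is more self-contained --- that lifting observation replaces Lemma~\ref{LEM:not-derangement}, and the standard cycle decomposition of a permutation replaces the greedy chase of Lemma~\ref{LEM:ec-regular} --- and it cleanly isolates the group-theoretic heart of the matter. What the paper's route buys is the finer, reusable intermediate statement: any nonempty regular set decomposes inside the latin square into regular pieces, each of which already contributes a left-associated full product lying in $A(Q)$; your coarser matching in the quotient does not directly yield that refinement. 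Both treatments of the ``in particular'' clause are identical.
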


Applying these results, we establish that for all loops:
\begin{itemize}
\item (A) $\implies$ (C) by Theorem \ref{THM:kplex-kprod} and
\item (B) $\iff$ (C) by Theorems \ref{THM:DH-loops} and \ref{THM:pqnot-qsteptype}.
\end{itemize}

By 1779, Euler had shown that a cyclic group of even order has no transversal and in 1894 Maillet extended his argument to show that all loops for which condition (B) fails lack transversals \cite[p. 445]{Denes:1974ul}. In 2002, Wanless showed that such loops lack not just transversals, i.e. $1$-plexes, but contain no odd-plexes at all \cite{Wanless:2002rr}. While their arguments are quite nice, our proof of (A) $\Longrightarrow$ (B) provides an alternative, more algebraic proof of these results.

\begin{corollary}
If a loop fails to satisfy $(B)$, then it has no regular row odd-plexes.
\end{corollary}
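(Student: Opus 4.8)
The plan is to derive a contradiction from the assumption that a loop $Q$ which fails $(B)$ nonetheless possesses a regular row odd-plex, by playing Theorem~\ref{THM:kplex-kprod} against Theorem~\ref{THM:pqnot-qsteptype}. The bridge between the two is the image of the full products in the abelianization $Q/Q'$. By the Observation (made precise in Lemma~\ref{LEM:pq-properties}), $P(Q)$ lies in a single coset of $Q'$; write $s \in Q/Q'$ for the corresponding element, so that the natural projection $Q \to Q/Q'$ sends every full product to $s$ and, since $Q/Q'$ is an abelian group and all associations and orderings collapse, sends every full $k$-product to $s^k$. Thus $P^k(Q) \subseteq s^k Q'$.

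First I would feed the hypotheses into the two theorems. Suppose $Q$ admits a regular row $k$-plex with $k$ odd. Theorem~\ref{THM:kplex-kprod} then gives $P^k(Q) \cap A(Q) \neq \emptyset$. Since $A(Q) \normal Q'$, this forces $P^k(Q)$ to meet $Q'$ itself, and as $P^k(Q) \subseteq s^k Q'$ we conclude $s^k = 1$ in $Q/Q'$. On the other hand, because $Q$ fails $(B)$, Theorem~\ref{THM:pqnot-qsteptype} yields $P(Q) \not\subseteq Q'$, i.e. $s \neq 1$.

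To close the argument I need the key parity fact $s^2 = 1$, which is exactly what lets an odd exponent clash with $s \neq 1$ and is where the real content of the corollary sits. It follows from a classical computation in the abelian group $A = Q/Q'$: writing $s$ as the product of all elements $\bar g$ with $g$ ranging over $Q$, each fiber of $Q \to A$ has size $|Q'|$, so $s = \left( \prod_{a \in A} a \right)^{|Q'|}$; and the product of all elements of a finite abelian group equals the product of its involutions and hence lies in the elementary abelian $2$-torsion subgroup $A[2]$, so it already has order dividing $2$. Consequently $s^2 = 1$. Combined with $s \neq 1$, the element $s$ has order exactly $2$, whence $s^k = s \neq 1$ for every odd $k$ --- contradicting $s^k = 1$ from the previous step. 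Hence no regular row odd-plex can exist.

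The main obstacle is not the logical chain, which is short, but isolating and justifying the order-$2$ fact cleanly in the non-associative setting; once one observes that projection to the abelian quotient $Q/Q'$ erases all associations and that the product of all elements of an abelian group is an involution, the parity obstruction to odd-plexes drops out at once. I would also remark that, since an ordinary $k$-plex is in particular a regular row $k$-plex, the corollary simultaneously recovers the classical non-existence of transversals (the case $k=1$) and Wanless's stronger non-existence of odd-plexes, while phrasing everything through the single invariant $s \in Q/Q'$.
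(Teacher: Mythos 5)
Your proof is correct and follows essentially the same route the paper intends: the corollary is exactly the combination of Theorem \ref{THM:kplex-kprod} (a regular row odd-plex forces $P^k(Q)$, $k$ odd, to meet $A(Q) \subseteq Q'$) with Theorem \ref{THM:pqnot-qsteptype} (failure of (B) forces $P(Q) \not\subseteq Q'$), closed by the parity fact that the coset of $Q'$ containing $P(Q)$ has order dividing $2$ in $Q/Q'$. The only cosmetic difference is that you re-derive that parity fact via the classical ``product of all elements of a finite abelian group is an involution'' argument, whereas the paper already supplies it as Lemma \ref{LEM:pq-properties}\ref{LEM:pq-prop-60}.
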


It is not true in general that (B) $\wedge$ (C) $\implies$ (A) (for a smallest possible counter-example, see Figure \ref{FIG:HP-counter-example}). In a separate paper still in preparation, we show that this implication holds in several technical varieties of loops that include non-associative members and provide both computational and theoretical evidence suggesting that it may hold in the well-known variety of Moufang loops as well \cite{Pula:2008xw}.

\begin{figure}[h]
\begin{center}
\begin{tabular}{c|c c c c c c } 
$Q$ & 1 & 2 & 3 & 4 & 5 & 6 \\ \hline 
1 & \hcell{1} & 2 & 3 & 4 & 5 & 6 \\ 
2 & 2 & 1 & 4 & \hcell{3} & 6 & 5 \\  
3 & 3 & 5 & \hcell{1} & 6 & 2 & 4 \\  
4 & \hcell{4} & 6 & 2 & 5 & 1 & 3 \\  
5 & 5 & 3 & 6 & 2 & 4 & \hcell{1} \\  
6 & \hcell{6} & 4 & 5 & 1 & 3 & 2 \\  
\end{tabular}
\caption{Loop $Q$ with no transversal and yet $P(Q) = Q' = Q$. $Q$ contains $168$ regular row transversals, one of which has been bracketed.}
\label{FIG:HP-counter-example}
\end{center}
\end{figure}

The equivalent statements in Theorem \ref{THM:HP} are typically stated with the additional claim that $G$ can be partitioned into $n$ mutually disjoint transversals, i.e. $G$ has an orthogonal mate. In the group case, it is easy to show that having an orthogonal mate is equivalent to having at least one transversal. While this equivalence may extend to other varieties of loops (this question for Moufang loops is addressed directly in \cite{Pula:2008xw}), the argument seems unrelated to the most difficult part of Theorem \ref{THM:HP} (that (2) $\implies$ (1)).

We do however introduce a weakening of the orthogonal mate condition in the following theorem. While this result follows directly from a combination of the Theorems \ref{THM:HP} and \ref{THM:kplex-kprod}, we provide an elementary proof (in particular, we avoid the classification of finite simple groups).

\begin{theorem}\label{THM:one-in-pg-iff-ceregrowtrans}
If $G$ is a group of order $n$, then the following are equivalent:
\begin{enumerate}
\item $G$ has a regular row transversal,
\item $G$ can be partitioned into $n$ mutually disjoint regular row transversals,
\item Sylow $2$-subgroups of $G$ are trivial or non-cyclic, and
\item $1 \in P(G)$.
\end{enumerate}
\end{theorem}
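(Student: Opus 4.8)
The plan is to prove the cycle $(2)\Rightarrow(1)$, $(1)\Rightarrow(4)$, $(4)\Rightarrow(1)$, $(1)\Rightarrow(2)$, together with the separate equivalence $(3)\Leftrightarrow(4)$. The two ``combinatorial'' equivalences I would handle by explicit construction, while the arithmetic content of $(3)$ I would defer to the structural results already proved.

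First the easy moves. The implication $(2)\Rightarrow(1)$ is immediate, since any single block of the partition is itself a regular row transversal. For $(1)\Rightarrow(4)$ I would simply specialize Theorem~\ref{THM:kplex-kprod}: a regular row transversal $C$ is a regular subset whose row-multiset $C_r$ is all of $G$ (each element once), so $P(C_r)=P(G)$ meets $A(G)=\{1\}$, which is exactly the statement $1\in P(G)$. For $(1)\Rightarrow(2)$ I would exploit the group structure: given a regular row transversal $\{(x,\sigma(x),x\sigma(x)):x\in G\}$, right-translate the selected column in each row by a fixed $g\in G$ to obtain $\{(x,\sigma(x)g,x\sigma(x)g)\}$. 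Regularity is preserved because right multiplication by $g$ carries the multiset of columns and the multiset of entries onto one another simultaneously, and as $g$ ranges over $G$ the chosen cell in each fixed row runs over every column exactly once. Hence these $n$ translates are pairwise disjoint and tile the whole table, yielding the partition required by $(2)$.

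The heart of the argument is $(4)\Rightarrow(1)$, converting a product-one ordering into a combinatorial transversal. Given an ordering $g_1,\dots,g_n$ of $G$ with $g_1\cdots g_n=1$, I would form the suffix products $w_i=g_ig_{i+1}\cdots g_n$, so that $w_{n+1}=w_1=1$ and $g_i w_{i+1}=w_i$. Selecting, for each $i$, the cell $(g_i,w_{i+1},w_i)$ then gives exactly one cell in every row (the rows $g_1,\dots,g_n$ exhaust $G$), and its column-multiset $\{w_2,\dots,w_{n+1}\}$ coincides with its entry-multiset $\{w_1,\dots,w_n\}$ precisely because $w_{n+1}=w_1$. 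This is a regular row transversal. The subtlety to check is that regularity follows from the telescoping closure $w_{n+1}=w_1$ alone, not from any distinctness among the $w_i$, which may well repeat; I expect this verification to be the main obstacle, although it is short.

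Finally I would connect $(3)$ through machinery already in hand. Since $P(G)$ lies in a single coset of $G'$ and $1\in G'$, the condition $1\in P(G)$ is equivalent to $P(G)\subseteq G'$, the reverse implication using Theorem~\ref{THM:DH-loops} with $A(G)=\{1\}$. By Theorem~\ref{THM:pqnot-qsteptype} this is in turn equivalent to condition $(B)$, and $(B)$ is equivalent to $(3)$ by the standard transfer argument showing that a finite group has a normal subgroup of odd order with cyclic $2$-power quotient if and only if its Sylow $2$-subgroups are cyclic and nontrivial. This closes all four equivalences while invoking only Theorems~\ref{THM:kplex-kprod}, \ref{THM:DH-loops}, and \ref{THM:pqnot-qsteptype} and elementary group theory, thereby avoiding Theorem~\ref{THM:HP} and the classification of finite simple groups.
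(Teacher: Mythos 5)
Your proposal is correct, and its skeleton---the cycle through (1), (2), (4) plus a separate equivalence with (3)---matches the paper's: your arguments for (2)$\Rightarrow$(1), for (1)$\Rightarrow$(2) (right-translating the selected cells by each $g\in G$), and for (1)$\Rightarrow$(4) (specializing Theorem~\ref{THM:kplex-kprod} with $A(G)=\{1\}$) are exactly the paper's. You genuinely diverge in two places. For (4)$\Rightarrow$(1), the paper first splits the product-one ordering into blocks having no proper contiguous subsequence evaluating to $1$, proves a separate lemma (Lemma~\ref{LEM:col-entry-reg-construction}) that within each block the construction selects no column twice, and then takes a union over blocks. Your observation that this decomposition is unnecessary is right: applying the suffix-product construction to the whole ordering at once gives one cell $(g_i,w_{i+1},w_i)$ per row, and regularity needs only the multiset identity $\{w_2,\dots,w_{n+1}\}=\{w_1,\dots,w_n\}$, which holds because $w_{n+1}=w_1=1$; repetitions among the $w_i$ are harmless, since a regular row transversal (unlike a transversal) may repeat columns---compare the bracketed selection in Figure~\ref{FIG:HP-counter-example}, where column $1$ is chosen three times. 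The paper's detour buys column-distinctness within blocks, which the theorem never uses, so your version is a genuine simplification of this step. For (3)$\Leftrightarrow$(4), the paper simply cites the established equivalence (2)$\Leftrightarrow$(3) of Theorem~\ref{THM:HP}, for which Vaughan-Lee and Wanless gave an elementary proof; you instead chain $1\in P(G)\iff P(G)\subseteq G'$ (Lemma~\ref{LEM:pq-properties} in one direction, Theorem~\ref{THM:DH-loops} with $A(G)=\{1\}$ in the other), then Theorem~\ref{THM:pqnot-qsteptype}, then the Burnside normal-complement argument. This chain is valid and has the appeal of reusing the paper's own machinery, but note the cost: Theorem~\ref{THM:DH-loops} rests on the D\'enes-Hermann theorem, whose original proof invoked the Feit-Thompson theorem, so your route is less elementary than the paper's citation of Vaughan-Lee and Wanless---a point that matters here, since the paper's stated reason for proving this theorem directly is to keep the proof elementary (though, as you say, both routes do avoid the classification of finite simple groups).
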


\begin{corollary}\label{COR:HP-equiv-reg-row-trans}
Theorem \ref{THM:HP} is equivalent to the claim that a group has a transversal if and only if it has a regular row transversal.
\end{corollary}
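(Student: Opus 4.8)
The plan is to observe that Theorem~\ref{THM:one-in-pg-iff-ceregrowtrans} already does almost all of the work, since its conditions encode conditions (2) and (3) of Theorem~\ref{THM:HP}. Indeed, condition (3) of Theorem~\ref{THM:one-in-pg-iff-ceregrowtrans} (``Sylow $2$-subgroups are trivial or non-cyclic'') is verbatim condition (2) of Theorem~\ref{THM:HP}, and condition (4) of Theorem~\ref{THM:one-in-pg-iff-ceregrowtrans} (``$1 \in P(G)$'') is exactly condition (3) of Theorem~\ref{THM:HP}, because in a group a full product equal to $1$ is the same thing as an ordering $g_1 \cdots g_n = 1$. The essential point I would stress is that Theorem~\ref{THM:one-in-pg-iff-ceregrowtrans} is proved \emph{unconditionally} (its elementary proof avoids the classification of finite simple groups and hence the deep content of Theorem~\ref{THM:HP}); thus we may freely use that, for every finite group $G$, the assertions ``$G$ has a regular row transversal'', ``the Sylow $2$-subgroups of $G$ are trivial or non-cyclic'', and ``$1 \in P(G)$'' are mutually equivalent, without presupposing Theorem~\ref{THM:HP}.

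Granting this, I would carry out the equivalence in two short steps. Write $T$ for ``$G$ has a transversal'' (condition (1) of Theorem~\ref{THM:HP}) and $R$ for ``$G$ has a regular row transversal''; the claim in the corollary is precisely that $T \iff R$ holds for every finite group $G$. For the forward direction, assume Theorem~\ref{THM:HP}. Then $T$ is equivalent to condition (2) of Theorem~\ref{THM:HP}, which by Theorem~\ref{THM:one-in-pg-iff-ceregrowtrans} is equivalent to $R$; hence $T \iff R$ for all $G$, which is the claim. For the converse, assume the claim $T \iff R$. By Theorem~\ref{THM:one-in-pg-iff-ceregrowtrans}, $R$ is equivalent to both condition (2) and condition (3) of Theorem~\ref{THM:HP}, so chaining these equivalences shows that conditions (1), (2), and (3) of Theorem~\ref{THM:HP} are all equivalent for every $G$; that is, Theorem~\ref{THM:HP} holds.

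The argument is almost entirely logical bookkeeping, so the one thing to be careful about, and the place I expect the only genuine subtlety, is the non-circularity of the converse. The reduction works precisely because Theorem~\ref{THM:one-in-pg-iff-ceregrowtrans} supplies the equivalence of conditions (2) and (3) of Theorem~\ref{THM:HP} (and their common equivalence with $R$) on its own, independently of the transversal condition. This is what isolates the genuinely deep content of Theorem~\ref{THM:HP} into the single statement $T \iff R$: once the conditions involving $1 \in P(G)$ and the Sylow structure have been identified with the combinatorial condition $R$, the entire Hall-Paige theorem collapses to the equivalence of ``transversal'' and ``regular row transversal.'' I would make explicit in the write-up that no appeal to Theorem~\ref{THM:HP} is hidden inside Theorem~\ref{THM:one-in-pg-iff-ceregrowtrans}, lest the claimed equivalence become vacuous.
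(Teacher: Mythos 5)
Your proposal is correct and follows exactly the route the paper intends: the corollary is stated as an immediate consequence of Theorem~\ref{THM:one-in-pg-iff-ceregrowtrans}, whose elementary (classification-free) proof supplies the equivalence of ``regular row transversal,'' the Sylow $2$-condition, and $1 \in P(G)$ independently of Theorem~\ref{THM:HP}, so the two-way logical bookkeeping you carry out is precisely the implicit argument. Your explicit attention to non-circularity is a point the paper leaves unstated but relies on.
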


We make the following two observations not to suggest that our methods may be useful in tackling these important problems but rather to indicate their theoretical context.

\begin{observation}
When $\SQ$ is the class of odd ordered loops, condition $(B)$ always holds and thus the implication (B)~$\implies$~(A) is equivalent to Ryser's conjecture, that every latin square of odd order has a transversal $\cite{Colbourn:1996cj}$.

A natural question might be whether $Q$ has a $2$-plex if and only if $A(Q)$ intersects $P^2(Q)$. Since this latter condition is satisfied in all loops, an affirmative answer to this question is equivalent a conjecture attributed to Rodney, that every latin square has a $2$-plex $\cite[p. 105]{Colbourn:1996cj}$.
\end{observation}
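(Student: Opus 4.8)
The plan is to treat the two halves of the observation separately, each of which reduces to a short structural fact once the earlier theorems are brought to bear. For the first half, I would begin by showing that (B) is vacuous in odd order. If $N \normal Q$, the cosets of $N$ partition $Q$ into blocks of equal size, so $|Q| = |N|\cdot|Q/N|$; hence when $|Q|$ is odd every quotient $Q/N$ has odd order and therefore cannot be isomorphic to $\ZZ_{2^m}$ with $m \geq 1$, whose order $2^m$ is even. Thus no forbidden quotient exists and (B) holds for every odd-ordered loop, so the implication (B) $\implies$ (A) degenerates into the bare assertion that every odd-ordered loop has a transversal. Since every latin square of odd order is, after permuting rows and columns (an operation preserving transversals), the multiplication table of an odd-ordered loop, this assertion is exactly Ryser's conjecture.

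For the second half the real content is the claim that $A(Q)$ meets $P^2(Q)$ in every loop; I would establish this via $P^2(Q) \subseteq Q'$. By the opening observation $P^2(Q)$ lies in a single coset of $Q'$, and I identify that coset by projecting to the abelian quotient $Q/Q'$, in which order and association are immaterial. The image of any full $2$-product is $2\sum_{x \in Q}\overline{x} = 2|Q'|\sigma$, where $\sigma$ denotes the sum of all elements of $Q/Q'$; using the standard fact that $\sigma$ has order dividing two in any finite abelian group, this image equals $|Q'|\,(2\sigma) = 0$. Hence $P^2(Q) \subseteq Q'$, and Theorem \ref{THM:DH-loops} applied with $k = 2$ --- so that the relevant coset of $Q'$ is $Q'$ itself and $A(Q)$ is one of the cosets of $A(Q)$ it contains --- yields $A(Q) \cap P^2(Q) \neq \emptyset$.

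Granting this, the reduction to Rodney's conjecture is purely formal. If the proposed biconditional held in every loop, then since its right-hand side always holds, every loop would have a $2$-plex; as having a $2$-plex is invariant under permuting rows and columns, this says every latin square has a $2$-plex, which is Rodney's conjecture, and conversely if Rodney's conjecture holds both sides of the biconditional are uniformly true and it holds trivially. The only genuine step is therefore the computation $P^2(Q) \subseteq Q'$; everything else is Lagrange's identity for normal subloops together with the dictionary between loops and latin squares, so I do not expect any serious obstacle here.
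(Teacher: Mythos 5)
Your proposal is correct, but the way you certify the key claim---that $A(Q)$ meets $P^2(Q)$ in every loop---is genuinely different from, and far heavier than, what the paper intends. The paper's justification is Lemma \ref{LEM:pq-properties}(i): writing $q^\rho$ for the right inverse of $q$, the product $\prod_{q \in Q} q q^\rho$ uses each element of $Q$ exactly twice and evaluates to $1$, so $1 \in P^2(Q)$; since $1 \in A(Q)$ trivially, the intersection is immediate and the observation really is a one-line triviality. You instead prove $P^2(Q) \subseteq Q'$ by an abelianization computation (a correct and rather pleasant alternative derivation of Lemma \ref{LEM:pq-properties}(v): the image of a full $2$-product in $Q/Q'$ is $2|Q'|\sigma = |Q'|(2\sigma) = 0$), and then invoke Theorem \ref{THM:DH-loops} to pass from ``contained in $Q'$'' to ``meets $A(Q)$.'' That step is valid and non-circular, since Theorem \ref{THM:DH-loops} is proved independently of this observation, but it rests on the full D\'enes--Hermann theorem (Theorem \ref{THM:denes-hermann}), a deep result; deploying it here misses that no such machinery is needed, and it obscures the point of stating this as an observation rather than a theorem---namely that the equivalence with Rodney's conjecture is elementary and independent of hard group theory. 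Your first half (cosets of a normal subloop have equal size, so an odd-order loop admits no quotient isomorphic to $\ZZ_{2^m}$, plus the loop/latin-square dictionary under row and column permutations) and your formal reduction to Rodney's conjecture match the paper's implicit reasoning exactly.
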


\section{Properties of the sets $P^k(Q)$}\label{SEC:pkq-properties}

We begin with a sequence of easy observations about the sets $P^k (Q)$.

\begin{lemma}\label{LEM:pq-properties}
For $i,j,k \geq 1$,
\begin{enumerate}
\item\label{LEM:pq-prop-10}
$1 \in P^2 (Q)$,
\item\label{LEM:pq-prop-20}
$P^i (Q) P^j(Q) \subseteq P^{i+j}(Q)$ and $|P^k(Q)| \leq |P^{k+1}(Q)|$,
\item\label{LEM:pq-prop-30}
$P^k(Q)$ is contained in a coset of $Q'$,
\item\label{LEM:pq-prop-40}
$P^{k}(Q) \subseteq P^{k+2}(Q)$,
\item\label{LEM:pq-prop-50}
$P^2(Q) \subseteq Q'$, and
\item\label{LEM:pq-prop-60} 
$P(Q) \subseteq aQ'$ where $a^2 \in Q'$.
\end{enumerate}
\end{lemma}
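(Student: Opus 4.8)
The plan is to prove the six claims in dependency order, so that the only genuinely new construction is the one establishing (i); the remaining claims are bookkeeping about factorizations combined with the quotient homomorphism $\pi : Q \to Q/Q'$. For (i), I would pair each $x \in Q$ with its right inverse $x^\rho$, the unique element with $x \cdot x^\rho = 1$. Since $R_z$ is a bijection for each fixed $z$, the assignment $x \mapsto x^\rho$ is injective and hence a bijection of $Q$; therefore the multiset $\{\,x, x^\rho : x \in Q\,\}$ contains every element of $Q$ exactly twice. Bracketing the corresponding factorization as a product of the two-element blocks $x \cdot x^\rho$, each block evaluates to $1$, and any product of copies of $1$ is again $1$ regardless of association, so $1 \in P^2(Q)$.

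For (ii), the inclusion $P^i(Q)P^j(Q) \subseteq P^{i+j}(Q)$ is immediate by concatenating factorizations: if $a$ is a full $i$-product and $b$ a full $j$-product, then the single product $a \cdot b$ uses every element exactly $i+j$ times. For the inequality I would fix any $p \in P^1(Q)$ (nonempty, since the elements of $Q$ can be multiplied in some fixed order and association) and note that left translation $L_p$ is a bijection of $Q$ carrying $P^k(Q)$ injectively into $P^1(Q)P^k(Q) \subseteq P^{k+1}(Q)$, giving $|P^k(Q)| \le |P^{k+1}(Q)|$. Claim (iii) re-derives the earlier Observation: applying $\pi$ to any bracketed factorization of a full $k$-product produces the analogous bracketed product of images in $Q/Q'$; because that loop is an associative, commutative group, the value equals $\prod_{x \in Q} \pi(x)^k$ independently of the order and association chosen, so all of $P^k(Q)$ has a single image and lies in one coset of $Q'$.

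The last three claims then follow formally. For (iv), $a = a \cdot 1 \in P^k(Q)P^2(Q) \subseteq P^{k+2}(Q)$ by (i) and (ii). For (v), claim (iii) confines $P^2(Q)$ to one coset of $Q'$, and since (i) provides $1 \in P^2(Q) \cap Q'$, that coset must be $Q'$ itself. For (vi), taking any $a \in P(Q)$, claim (iii) gives $P(Q) \subseteq aQ'$, while $a^2 = a \cdot a \in P^2(Q) \subseteq Q'$ by (ii) and (v).

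I do not anticipate a serious obstacle, as the entire content resides in (i). The one point requiring care is that two of the arguments must be insensitive to how products are bracketed: in (i) because every partial product collapses to $1$, and in (iii) because the target $Q/Q'$ is associative and commutative, so that no choice of association in the preimage can affect the image.
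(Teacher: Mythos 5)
Your proposal is correct and follows essentially the same route as the paper: right-inverse pairing for $1 \in P^2(Q)$, concatenation of factorizations plus translation by a fixed full product for (ii), reduction modulo $Q'$ for (iii), and the same formal deductions for (iv)--(vi). The only differences are cosmetic --- you make explicit two points the paper leaves implicit (that $q \mapsto q^\rho$ is a bijection, and that $P(Q) \neq \emptyset$), and in (vi) you take $a \in P(Q)$ directly so that $a^2 \in P^2(Q) \subseteq Q'$, rather than arguing via $P(Q)^2 \subseteq a^2Q'$.
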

\begin{proof}
\ref{LEM:pq-prop-10}
Let $q^\rho$ be the right inverse of $q$. Then $1 = \prod_{q \in Q} q q^\rho \in P^2(Q)$.

\ref{LEM:pq-prop-20}
Observe that $P^i (Q) P^j(Q) = \{ab : a \in P^i(Q), b \in P^j(Q)\}$. Thus $ab$ is a full $(i+j)$-product. It then follows that $|P^k(Q)| \leq |P^{k+1}(Q)|$ since for $q \in P(Q)$, $q P^k(Q) \subseteq P^{k+1}(Q)$ and $|P^k(Q)| = |q P^k(Q)|$.

\ref{LEM:pq-prop-30}
Any two elements of $P^k(Q)$ have factors that differ only in their order and association. In other words, if $x,y \in P^k(Q)$, then $xQ' = yQ'$. 

\ref{LEM:pq-prop-40}
By \ref{LEM:pq-prop-10}, we have $1 \in P^2(Q)$; thus $P^k(Q)  = P^k(Q) \cdot 1 \subseteq P^k(Q) P^2(Q)$. By \ref{LEM:pq-prop-20} we have $P^k (Q) P^2 (Q) \subseteq P^{k+2} (Q)$. Thus $P^k(Q) \subseteq P^{k+2} (Q)$.

\ref{LEM:pq-prop-50}
The claim follows immediately from \ref{LEM:pq-prop-10} and \ref{LEM:pq-prop-30}.

\ref{LEM:pq-prop-60}
By \ref{LEM:pq-prop-30}, $P(Q) \subseteq aQ'$ for some $a \in Q$ and thus $P(Q)^2 \subseteq a^2 Q'$. By \ref{LEM:pq-prop-20}, $P(Q)^2 \subseteq P^2(Q)$ and by \ref{LEM:pq-prop-50} $P^2 (Q) \subseteq Q'$. It follows that $a^2 Q' = Q'$ and thus $a \in Q'$.
\end{proof}

Our next lemma uses the idea that $Q/N$ is a set of cosets of $N$ and thus $P(Q/N)$ is a subset of these cosets.

\begin{lemma}\label{LEM:pq-intersects-pqnk}
If $N \normal Q$, $|N| = k$, and $a_1 N, \ldots, a_k N \in P(Q/N)$, then
\[P(Q) \cap (a_1 N \cdots a_k N) \neq \emptyset.\] That is, $P(Q)$ intersects every member of $P(Q/N)^k$.
\end{lemma}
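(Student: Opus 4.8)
The plan is to lift full products of $Q/N$ to full products of $Q$ using the fact that the canonical projection $\pi : Q \to Q/N$ is a loop homomorphism. Write $m = |Q/N|$ and let $C_1, \ldots, C_m$ denote the distinct cosets of $N$, so that $Q$ is partitioned into these $m$ blocks, each of size $k = |N|$.

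First I would fix, for each $i \in \{1, \ldots, k\}$, a full-product expression $w_i$ in $Q/N$ witnessing $a_i N \in P(Q/N)$: that is, an ordering and association of the cosets $C_1, \ldots, C_m$ whose product in $Q/N$ equals $a_i N$. Viewing $w_i$ as a template with one slot per coset, I observe that if I replace each coset by any of its representatives in $Q$ and evaluate in $Q$ with the same association, then applying $\pi$ and using that $\pi$ is a homomorphism shows the resulting element of $Q$ lies in $a_i N$, independently of which representatives were chosen.

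This representative-independence is the crux and provides exactly the freedom needed. For each coset $C_j$ I have $k$ slots to fill, one in each of the templates $w_1, \ldots, w_k$, and exactly $k = |N|$ available elements, so I choose an arbitrary bijection assigning the $k$ elements of $C_j$ to these $k$ slots. Carrying this out for every $j$ uses each element of $Q$ precisely once across the $k$ templates, and evaluating template $w_i$ on its assigned representatives yields an element $p_i \in a_i N$ whose factors, taken together over all $i$, constitute exactly $Q$.

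Finally I would combine the $k$ evaluated sub-products into a single expression by multiplying $p_1, \ldots, p_k$ using the same association as in the targeted product $a_1 N \cdots a_k N$. Since each element of $Q$ appears exactly once among the factors, this is a genuine full product of $Q$; applying $\pi$ once more shows its value lies in $(a_1 N)(a_2 N) \cdots (a_k N)$ under the chosen association, so $P(Q)$ meets that coset. I expect the only delicate points to be bookkeeping: verifying that the homomorphism property transfers both the ordering and the association faithfully from $Q/N$ to $Q$, and confirming that the slot-filling bijections exhaust $Q$ — both of which hinge on the equality $|N| = k$ matching the number of given full products.
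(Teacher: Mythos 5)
Your proposal is correct and is essentially the paper's own argument: the paper likewise fixes an ordering/association of cosets for each $a_iN$, invokes the fact that the resulting coset equation is independent of the representatives chosen, and then selects $k$ disjoint systems of coset representatives (your slot-filling bijections, phrased differently) so that concatenating the $k$ lifted products exhausts $Q$ and lands in $a_1N\cdots a_kN$. No gaps.
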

\begin{proof}
Let $|Q| = mk$. For any $aN \in P(Q/N)$, we may select a system of coset representatives of $N$ in $Q$, say $\{x_1, \ldots, x_m\}$, and some association of the left hand side such that
\begin{align}
x_1 N \cdots x_m N = aN \label{EQN:dh-loops-lemma}
\end{align}
and thus using the same association pattern $x_1 \cdots x_m \in aN$. Furthermore, since (\ref{EQN:dh-loops-lemma}) depends only on the order and association of the cosets of $N$ (rather than the specific representatives chosen), we may select $k$ disjoint sets of coset representatives of $N$ in $Q$, say $\{ x_{(i,1)}, \ldots, x_{(i,m)} : 1 \leq i \leq k \}$, and corresponding association patterns such that
\[ (x_{(1,1)} \cdots x_{(1,m)}) \cdots (x_{(k,1)} \cdots x_{(k,m)}) \in a_1 N \cdots a_k N \in P(Q/N)^k\]
for any selection of $a_i N \in P(Q/N)$ for $1 \leq i \leq k$. Having selected each element of $Q$ as a coset representative precisely once, the left-hand side falls in $P(Q)$ and we are done.
\end{proof}

\section{Theorem \ref{THM:kplex-kprod}}\label{SEC:kplex-kprod}

For $x \in Q$, we write $L_x$ ($R_x$) for the left (right) translation of $Q$ by $x$. Our notation for the left translation is not to be confused with the convention of using $L$ for a latin square. The latter usage appears only in our introduction. In any case, the meaning is always made clear by the context.

The \define{multiplication group of $Q$}, written $\Mlt(Q)$, is the subgroup of the symmetric group $S_Q$ generated by all left and right translations, i.e. $\sqn{L_x, R_x : x \in Q}$, while the \define{left multiplication group of $Q$}, written $\LMlt(Q)$, is generated by all left translations. If $H \normal Q$ and $\rho \in \Mlt(Q)$, we may define the map $\rho_H (x H) := \rho(x) H$, which is said to be induced by $\rho$. It is straightforward to verify that the map is well-defined and that $\rho_H \in \Mlt(Q/H)$.

To prove Theorem \ref{THM:kplex-kprod} in the group case one would like to use the fact that from an identity like
\[ a_1 (a_2 (\cdots (a_k x ) \cdots ) = x \]
we may conclude that $a_1 (a_2 (\cdots (a_k ) \cdots ) = 1$, which is trivial in the presence of associativity but typically false otherwise. In the general loop case, the following lemma shows we can at least conclude that
\[ a_1 (a_2 (\cdots (a_k ) \cdots ) \in A(Q). \]

\begin{lemma}\label{LEM:not-derangement}
~\quad
\begin{enumerate}
\item If $\rho \in \LMlt(Q)$, then $\rho_{A(Q)} = L_{\rho(1)A(Q)}$.
\item If $\rho \in \Mlt(Q)$, then $\rho_{Q'} = L_{\rho(1)Q'}$.
\item Since they are left translations, $\rho_{A(Q)}$ and $\rho_{Q'}$ are constant if and only if they have fixed points.
\item If $a_1 (a_2 (\cdots (a_k x ) \cdots ) = x$, then $a_1 (a_2 (\cdots (a_k ) \cdots ) \in A(Q)$.
\end{enumerate}
\end{lemma}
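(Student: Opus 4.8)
The plan is to exploit the fact that the quotients $Q/A(Q)$ and $Q/Q'$ are a group and an Abelian group, respectively, so that the associativity obstruction to the naive group-theoretic argument vanishes once we pass to the quotient. Throughout I would use that $\rho \mapsto \rho_H$ is a homomorphism $\Mlt(Q) \to \Mlt(Q/H)$: both well-definedness and $(\sigma\tau)_H = \sigma_H\tau_H$ follow routinely from the defining formula $\rho_H(xH) = \rho(x)H$, as already noted in the paragraph preceding the lemma.

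For (1), I would first compute the induced maps of the generators. Since $Q/A(Q)$ is a group, $(L_x)_{A(Q)}(yA(Q)) = (xy)A(Q) = (xA(Q))(yA(Q))$, so $(L_x)_{A(Q)}$ is the left translation $L_{xA(Q)}$ of the group $Q/A(Q)$. Hence any $\rho \in \LMlt(Q)$, being a product of translations $L_{x_i}$, induces a product of left translations of $Q/A(Q)$. By associativity in a group a product of left translations is again a left translation, and evaluating at the identity coset shows it must be $L_{\rho(1)A(Q)}$, since $\rho_{A(Q)}(A(Q)) = \rho(1)A(Q)$; this is (1). For (2) the argument is identical after one extra observation: in the \emph{Abelian} quotient $Q/Q'$ right translations coincide with left translations, because $(R_x)_{Q'}(yQ') = (yx)Q' = (xy)Q' = L_{xQ'}(yQ')$. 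Thus every generator of $\Mlt(Q)$, both $L_x$ and $R_x$, induces a left translation of $Q/Q'$, and the same collapse yields $\rho_{Q'} = L_{\rho(1)Q'}$. This is precisely the point at which $Q'$, rather than $A(Q)$, is required: $A(Q)$ need only make the quotient a group, not Abelian, so right translations need not be absorbed as left translations modulo $A(Q)$.

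For (3), in any group a left translation $L_a$ satisfies $L_a(x) = x$ for some $x$ if and only if $a$ is the identity, i.e. if and only if $L_a$ is itself the identity permutation; applying this to the left translations produced in (1) and (2) gives the claim. Finally, for (4) I would set $\rho = L_{a_1}L_{a_2}\cdots L_{a_k} \in \LMlt(Q)$, so that the hypothesis $a_1(a_2(\cdots(a_k x)\cdots)) = x$ says $\rho$ fixes $x$, whence $\rho_{A(Q)}$ fixes $xA(Q)$. By (3) then $\rho_{A(Q)}$ is the identity, and by (1) $\rho(1)A(Q) = A(Q)$, that is $\rho(1) \in A(Q)$. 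Since $a_k \cdot 1 = a_k$, we have $\rho(1) = a_1(a_2(\cdots(a_k)\cdots))$, which is exactly the desired conclusion.

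I expect no genuine computational obstacle here; the content is conceptual. The one step that deserves care is the passage in (2): recognizing that the Abelian-ness of $Q/Q'$, and not merely the group structure of $Q/A(Q)$, is exactly what allows right translations to be absorbed as left translations, so that all of $\Mlt(Q)$, and not just $\LMlt(Q)$, induces left translations in the quotient. The well-definedness and homomorphism property of $\rho \mapsto \rho_H$ is routine but should be invoked explicitly, since it underlies the reduction of each $\rho$ to a product of images of generators.
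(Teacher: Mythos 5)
Your proposal is correct and follows essentially the same route as the paper: pass to the quotient $Q/A(Q)$ (resp.\ $Q/Q'$), use the group (resp.\ Abelian) structure there to conclude the induced map is the left translation $L_{\rho(1)H}$, and deduce (4) from the fixed-point observation (3). The only cosmetic difference is that you organize (1) and (2) via the homomorphism $\rho \mapsto \rho_H$ applied to generators, where the paper reassociates (and commutes) the explicit product directly; the two arguments are interchangeable.
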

\begin{proof} Set $A := A(Q)$.

(i) Let $\rho = L_{a_1} \cdots L_{a_k}$. Then $\rho_A (qA) = a_1 (a_2 \cdots (a_k q ) \cdots ) A$. Since $Q/A$ is a group, we may reassociate to get $\rho_A (qA) = a_1 (a_2 \cdots (a_k ) \cdots ) A \cdot q A = \rho(1) A \cdot qA$. Thus $\rho_A = L_{\rho(1)A}$.

(ii) Let $\rho = T^{\epsilon_1} _{a_1} \cdots T^{\epsilon_k} _{a_k}$ where $T^{\epsilon_i} \in \{L,R\}$. Since $Q/Q'$ is an Abelian group, we may reassociate and commute to get $\rho_{Q'} (qQ') = a_1 (a_2 \cdots (a_k ) \cdots ) Q' \cdot q Q' =\rho(1) Q' \cdot qQ'$. Thus $\rho_{Q'} = L_{\rho(1)Q'}$.

(iii) Since $\rho_{A(Q)}$ is a left translation, if it has a fixed point, it is constant. Likewise for $\rho_{Q'}$.
 
(iv) Let $\rho (z) := a_1 (a_2 (\cdots (a_k z ) \cdots )$. Since $\rho_{A} (xA) = \rho(x)A = xA$, it is constant by (iii). Thus $\rho_A (A) = A$ and in particular $\rho(1) = a_1 (a_2 (\cdots (a_k ) \cdots ) \in A$.
\end{proof}

Lemma \ref{LEM:not-derangement} is stated somewhat more generally then we actually need. If the translation notation feels cumbersome, the idea is very basic. Given the product $a_1 (a_2 (\cdots (a_k x ) \cdots ) = x$, we may reduce both sides mod $A$ to get
\begin{align*}
a_1 A a_2 A \cdots a_k A x A &= x A \\
a_1 A a_2 A \cdots a_k A &= 1 A \\
a_1 a_2 \cdots a_k &\in A .
\end{align*}

\begin{lemma}\label{LEM:ec-regular}
If $C \neq \emptyset$ is regular, then there exists $C'$ such that 
\begin{enumerate}
\item $\emptyset \neq C' \subseteq C$,
\item $P(C'_r)$ intersects $A(Q)$, and
\item $C \setminus C'$ is regular (and possibly empty).
\end{enumerate}
It follows that $P(C_r)$ intersects $A(Q)$.
\end{lemma}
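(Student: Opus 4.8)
The plan is to use regularity to chain the triples of $C$ into cycles along which the multiplications telescope, and then invoke Lemma~\ref{LEM:not-derangement}(iv) to push each such telescoped product into $A(Q)$. Write $C = \{(x_i, y_i, z_i) : 1 \le i \le m\}$, so that $z_i = x_i y_i$ in $Q$. Regularity says exactly that the multiset of columns $\{y_i\}$ equals the multiset of entries $\{z_i\}$, so I may fix a bijection $\sigma$ of $\{1, \ldots, m\}$ with $z_i = y_{\sigma(i)}$ for every $i$. The role of $\sigma$ is that it feeds the entry of one triple into the column of another: along any cycle of $\sigma$, the entry of each triple is the column of the next.

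For the construction of $C'$ I would take a single cycle $(i_1\, i_2\, \cdots\, i_\ell)$ of $\sigma$ and set $C' = \{(x_{i_j}, y_{i_j}, z_{i_j}) : 1 \le j \le \ell\}$, which is nonempty and contained in $C$, giving (i). Since $\sigma(i_j) = i_{j+1}$ (indices mod $\ell$) means $z_{i_j} = y_{i_{j+1}}$, substituting repeatedly into $z_{i_j} = x_{i_j} y_{i_j}$ produces the telescoping identity
\[ x_{i_\ell}\bigl(x_{i_{\ell-1}}\bigl(\cdots (x_{i_1} y_{i_1}) \cdots \bigr)\bigr) = z_{i_\ell} = y_{i_1}. \]
Applying Lemma~\ref{LEM:not-derangement}(iv) with $x = y_{i_1}$ then yields $x_{i_\ell}(x_{i_{\ell-1}}(\cdots (x_{i_1}) \cdots)) \in A(Q)$; as this is a full product of the multiset $C'_r = \{x_{i_1}, \ldots, x_{i_\ell}\}$, we obtain $P(C'_r) \cap A(Q) \neq \emptyset$, which is (ii). For (iii), deleting this cycle removes the columns $\{y_{i_1}, \ldots, y_{i_\ell}\}$ and the entries $\{z_{i_1}, \ldots, z_{i_\ell}\}$; but $z_{i_j} = y_{i_{j+1}}$ shows these two multisets are equal, being merely a cyclic relabeling of one another. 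Hence column and entry multiplicities still agree on $C \setminus C'$, so it is regular.

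For the closing assertion I would induct on $|C|$. If $C \setminus C' = \emptyset$ we are done by (ii); otherwise $C \setminus C'$ is regular and strictly smaller, so by induction $P((C \setminus C')_r)$ meets $A(Q)$. Choosing $a \in P(C'_r) \cap A(Q)$ and $b \in P((C \setminus C')_r) \cap A(Q)$, the product $ab$ lies in $A(Q)$ since $A(Q)$ is a subloop, and it lies in $P(C_r)$ because $C_r = C'_r \sqcup (C \setminus C')_r$ as multisets and concatenating a full product of each part yields a full product of the union (the same elementary argument as $P^i P^j \subseteq P^{i+j}$ in Lemma~\ref{LEM:pq-properties}(ii)).

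I expect the genuine content to be concentrated in the telescoping step, but that is already supplied by Lemma~\ref{LEM:not-derangement}(iv); the remaining difficulty is purely the bookkeeping of columns against entries, namely checking that $\sigma$ exists (immediate from the multiset equality defining regularity) and that excising one of its cycles preserves regularity on the complement. Neither of these presents a real obstacle, so the main care needed is simply matching the telescoped identity to the precise hypothesis shape of Lemma~\ref{LEM:not-derangement}(iv).
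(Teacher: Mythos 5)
Your proposal is correct and takes essentially the same approach as the paper: extract a cycle of triples along which entries feed columns, telescope to an identity of the form $a_1(a_2(\cdots(a_s x)\cdots)) = x$, apply Lemma~\ref{LEM:not-derangement}(iv) to land a full product of $C'_r$ in $A(Q)$, and recurse on the regular complement. The only cosmetic differences are that you obtain the cycle from the cycle decomposition of an explicit bijection $\sigma$ matching entries to columns (the paper follows a path until it first repeats), and you spell out the concluding iteration as an induction using closure of $A(Q)$ under multiplication, which the paper leaves implicit.
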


\begin{proof}
Let $[k] := \{1, \ldots, k\}$. Suppose $C = \{(x_i, y_i, z_i) : i \in [k] \}$ is regular. Select $i_1 \in [k]$ at random. Having selected $i_1, \cdots, i_m \in [k]$, pick $i_{m+1} \in [k]$ such that $y_{i_m} = z_{i_{m+1}}$. Since $C$ is regular, such a selection can always be made. If $i_{m+1} \not \in \{i_1, \ldots, i_m\}$, continue.

Otherwise, stop and consider the set $\{i_j, \ldots, i_m\}$ where $i_j = i_{m+1}$. Reindex $C$ such that $\sqn{i_j, \ldots, i_m} = \sqn{1, \cdots, s}$ and set $C' := \{(x_i, y_i, z_i) : 1 \leq i \leq s\}$. Note that $y_s = z_1$. By construction, $C'$ has the following form:

\begin{align*}
C' = \{
& (x_1, y_1, y_s), \\
& (x_2, y_2, y_1), \\
& (x_3, y_3, y_2), \\
& \cdots \\
& (x_{s-1}, y_{s-1}, y_{s-2}), \\
& (x_{s}, y_{s}, y_{s-1}) \}.
\end{align*}

$C'$ is clearly regular and thus so too is $C \setminus C'$. Furthermore, by construction we have
\begin{align*}
x_1 (x_2 ( \cdots (x_s z_1 ) \cdots )) = z_1.
\end{align*}
By Lemma \ref{LEM:not-derangement}, $x_1 (x_2 ( \cdots (x_s ) \cdots )) \in A(Q)$. Since this product is in $P(C'_r)$ as well, $P(C'_r) \cap A(Q) \neq \emptyset$. Iterating this construction we have $P(C_r)$ intersects $A(Q)$.

\end{proof}

\begin{proof}[Proof of Theorem \ref{THM:kplex-kprod}]
If $C$ is a $k$-plex, then $C_r$ consists of $k$ copies of each element of $Q$ and thus $P(C_r) = P^k(Q)$. By Lemma \ref{LEM:ec-regular}, $P^k(Q)$ intersects $A(Q)$.
\end{proof}

\section{Theorem \ref{THM:one-in-pg-iff-ceregrowtrans}}

\begin{lemma}\label{LEM:col-entry-reg-construction}
If $G$ is a group and $g_1, \ldots, g_k \in G$ such that $g_1 \cdots g_k = 1$ and no proper contiguous subsequence evaluates to $1$, then $G$ admits a regular set $C$ such that $C_r = \{g_1, \ldots, g_k\}$ and no column (and thus no entry) is selected more than once.
\end{lemma}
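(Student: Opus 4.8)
The plan is to write down a single explicit selection of cells indexed by the partial products of the sequence $g_1, \ldots, g_k$, and then read off regularity and the distinctness of the selected columns almost immediately; the only place the hypothesis genuinely enters is in checking that the columns are distinct.

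First I would introduce the suffix products $\pi_i := g_{i+1} g_{i+2} \cdots g_k$ for $0 \leq i \leq k$, with the convention that $\pi_k$ is the empty product $1$. Since $g_1 \cdots g_k = 1$ we also have $\pi_0 = 1$, and because $G$ is a group the recurrence $g_i \pi_i = \pi_{i-1}$ holds for every $i$. This identity is exactly what makes $(g_i, \pi_i, \pi_{i-1})$ a cell of the multiplication table. I would therefore set
\[
C := \{ (g_i, \pi_i, \pi_{i-1}) : 1 \leq i \leq k \},
\]
which lies in the multiplication table of $G$ and visibly satisfies $C_r = \{g_1, \ldots, g_k\}$.

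Next I would verify regularity. The multiset of columns appearing in $C$ is $\{\pi_1, \ldots, \pi_k\}$ and the multiset of entries is $\{\pi_0, \ldots, \pi_{k-1}\}$. Because $\pi_k = 1 = \pi_0$, these two multisets coincide, so every symbol occurs as a column exactly as often as it occurs as an entry, and $C$ is regular.

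The one step that requires the hypothesis, and which I expect to be the crux, is showing that the columns are pairwise distinct. For $0 \leq i < j \leq k$ one has $\pi_i = \pi_j$ if and only if $g_{i+1} \cdots g_j = 1$. Whenever $(i,j) \neq (0,k)$ this is a proper contiguous subsequence, so by assumption it never equals $1$; hence the only coincidence among $\pi_0, \ldots, \pi_k$ is $\pi_0 = \pi_k$, and $\pi_0, \ldots, \pi_{k-1}$ are pairwise distinct. Consequently the columns $\pi_1, \ldots, \pi_k$ — which as a set equal $\{\pi_0, \ldots, \pi_{k-1}\}$ since $\pi_k = \pi_0$ — are all distinct, so no column is selected more than once; together with regularity this forces each entry symbol to be selected exactly once as well, completing the construction.
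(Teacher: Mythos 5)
Your proposal is correct and is essentially identical to the paper's own proof: both define the suffix products ($\pi_i$ there, $h_i$ in the paper), take $C = \{(g_i,\pi_i,\pi_{i-1})\}$, and deduce distinctness of columns by cancellation using the no-proper-subsequence hypothesis. If anything, your verification of regularity and your explicit handling of the case $(i,j)=(0,k)$ is slightly more careful than the paper's ``it is clear'' treatment.
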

\begin{proof}
Set $h_i := g_{i+1} \cdots g_k$ for $1 \leq i \leq k-1$ and $h_0 = h_k := 1$. Note that we have $g_i h_i = h_{i-1}$ for $1 \leq i \leq k$. We claim that $C := \{(g_i,h_i,h_{i-1}) : 1 \leq i \leq k\}$ is the desired regular set. It is clear that $C$ is regular and that $C_r = \{g_1, \ldots, g_k\}$. To see that no column is selected more than once, suppose that $h_i = h_{i+j}$ for $j\geq1$. That is, $g_{i+1} \cdots g_k = g_{i+j+1} \cdots g_k$. Canceling on the right, we have $g_{i+1} \cdots g_{i+j} = 1$, a contradiction.
\end{proof}

\begin{proof}[Proof of Theorem \ref{THM:one-in-pg-iff-ceregrowtrans}] ~

(i) $\Longrightarrow$ (ii)
In this case we may use the standard argument from the group case showing that a single transversal extends to $n$ disjoint transversals. Let $T = \{(x_i, y_i, z_i) : 1 \leq i \leq n \}$ be a regular row transversal of $G$. For each $g \in G$, form $T_g := \{(x,yg, zg) : (x,y,z) \in T\}$. It is easy to check that the family $\{T_g : g \in G\}$ partitions the multiplication table of $G$ into regular row transversals.

(i) $\Longleftarrow$ (ii)
If $G$ admits a partition into regular row transversals, then it certainly has a regular row transversal.

(i) $\Longrightarrow$ (iv)
Let $T$ be a regular row transversal. By Theorem \ref{THM:kplex-kprod}, $P(G) \cap A(G) \neq \emptyset$ and thus $1 \in P(G)$.

(i) $\Longleftarrow$ (iv)
Let $g_1 \cdots g_n = 1$. We partition $G$ as follows:
\begin{itemize}
\item If no proper contiguous subsequence of $g_1 \cdots g_n$ evaluates to $1$, stop.
\item Otherwise, extract the offending subsequence $g_i \cdots g_j = 1$ and note that
\[g_1 \cdots g_{i-1} g_{j+1} \cdots g_n = 1.\]
\item Iterate this process with these shortened products.
\end{itemize}

Suppose we have thus partitioned $G$ into $k$ disjoint sequences $\{g_{(i,1)}, \ldots, g_{(i,n_i) : 1 \leq i \leq k}\}$ such that $g_{(i,1)} \cdots g_{(i,n_i)} = 1$ for $1 \leq i \leq k$ and no proper contiguous subsequence of $g_{(i,1)}, \ldots, g_{(i,n_i)}$ evaluates to $1$. Now we apply Lemma \ref{LEM:col-entry-reg-construction} to each subsequence to get regular sets $C_i$ for $1 \leq i \leq k$. Then $\bigcup_{i=1} ^k C_i$ is a regular row transversal of $G$.

(iii) $\iff$ (iv)
As noted earlier, this is an established equivalence in the Hall-Paige conjecture.

\end{proof}

\section{Theorem \ref{THM:pqnot-qsteptype}}\label{SEC:qstep-kprod-equiv}

\begin{lemma}\label{LEM:hp-abelian}
$(2)$ $\iff$ $(3)$ holds for Abelian groups.
\end{lemma}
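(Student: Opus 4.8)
The plan is to exploit commutativity to collapse condition (3) to a single well-defined product and then evaluate that product explicitly using the structure of the $2$-torsion of $G$. Since $G$ is Abelian, the value of $g_1 \cdots g_n$ does not depend on the ordering, so condition (3) is equivalent to the single assertion $\prod_{g \in G} g = 1$. This removes the combinatorial content of (3) entirely and reduces the lemma to computing one group element.

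Next I would pair each element with its inverse. In $\prod_{g \in G} g$, every pair $\{g, g^{-1}\}$ with $g \neq g^{-1}$ contributes $g g^{-1} = 1$, so all such factors cancel and we are left with $\prod_{g \in G} g = \prod_{t \in G[2]} t$, where $G[2] := \{t \in G : t^2 = 1\}$. Writing the Sylow $2$-subgroup of $G$ as $\ZZ_{2^{a_1}} \times \cdots \times \ZZ_{2^{a_k}}$ (with each $a_i \geq 1$), and noting that all $2$-torsion of $G$ lives in this subgroup, the subgroup $G[2]$ is elementary Abelian of the form $(\ZZ_2)^k$. The problem therefore reduces to computing the product of all elements of $(\ZZ_2)^k$.

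The one point requiring care is this last computation, which is a parity argument and where the boundary cases must be tracked. Writing $(\ZZ_2)^k$ additively, the sum of all its elements has, in each coordinate, exactly $2^{k-1}$ entries equal to $1$. When $k \geq 2$ this count is even, so the coordinate sum is $0$ and the total product is the identity; when $k = 0$ the product is the empty product $1$; and when $k = 1$ the sum is the unique nonzero element, so $\prod_{t \in G[2]} t$ is the unique involution $t \neq 1$. Hence $\prod_{g \in G} g = 1$ precisely when $k \neq 1$.

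Finally I would translate $k \neq 1$ back into the language of condition (2). The case $k = 0$ says the Sylow $2$-subgroup is trivial, and $k \geq 2$ says it is non-cyclic; together these are exactly condition (2). The remaining case $k = 1$ is precisely the case that the Sylow $2$-subgroup is cyclic and nontrivial, i.e. the negation of (2), and there the product equals the unique involution, so condition (3) fails. This gives (2) $\iff$ (3). I do not expect a genuine obstacle beyond keeping the parity count and the cases $k = 0$, $k = 1$, $k \geq 2$ straight; everything else is forced by commutativity and the structure theorem for finite Abelian groups.
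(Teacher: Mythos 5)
Your proof is correct, but it takes a genuinely different route from the paper: the paper does not prove this lemma directly at all, instead citing Vaughan-Lee and Wanless (who proved the equivalence $(2) \iff (3)$ elementarily for \emph{all} finite groups), and, for an earlier indirect route, combining Paige's result that $(1) \iff (2)$ holds in Abelian groups with Hall and Paige's result that $(1) \iff (3)$ holds in solvable groups. What you do instead is the classical self-contained argument: commutativity collapses condition $(3)$ to the single assertion $\prod_{g \in G} g = 1$; the pairing $g \leftrightarrow g^{-1}$ reduces that product to the product over the $2$-torsion subgroup $G[2] \cong (\ZZ_2)^k$, where $k$ is the number of cyclic factors of the Sylow $2$-subgroup; and the coordinatewise parity count ($2^{k-1}$ ones per coordinate) shows this product is $1$ exactly when $k \neq 1$, which matches ``trivial or non-cyclic'' exactly. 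Your case analysis $k = 0$, $k = 1$, $k \geq 2$ is complete and the translation back to condition $(2)$ is right, so the argument stands on its own. What your approach buys is a fully elementary, self-contained proof (essentially Wilson's theorem for finite Abelian groups) requiring nothing beyond the structure theorem; what the paper's approach buys is brevity and, via the Vaughan-Lee--Wanless citation, the much stronger statement for all groups, of which the Abelian case is the only part the paper needs.
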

\begin{proof}
As mentioned above, Vaughan-Lee and Wanless give a direct, elementary proof of this result for all groups \cite{Vaughan-Lee:2003cr}. For an earlier though indirect proof, Paige showed that (1) $\iff$ (2) holds in Abelian groups \cite{Paige:1947jt} and Hall and Paige showed that (1) $\iff$ (3) in solvable groups \cite{Hall:1955zr}.
\end{proof}

\begin{lemma}\label{LEM:burnside}
If a group $G$ has a cyclic Sylow $2$-subgroup $S$, then there exists $N \normal G$ such that $G/N \cong S$.
\end{lemma}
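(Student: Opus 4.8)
The plan is to recognize this as an instance of Burnside's normal $p$-complement theorem for $p = 2$. That theorem states: if a Sylow $p$-subgroup $S$ of $G$ lies in the center of its own normalizer $N_G(S)$, then $G$ has a normal $p$-complement, i.e.\ a normal subgroup $N \normal G$ with $G = SN$ and $S \cap N = \{1\}$; in that case $G/N \cong S$. So it suffices to verify that a cyclic Sylow $2$-subgroup $S$ satisfies $S \leq Z(N_G(S))$, and then invoke Burnside.

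To establish this centrality hypothesis, I would first observe that $S$, being cyclic, is abelian, so $S \leq C_G(S) \leq N_G(S)$; since $S$ is a Sylow $2$-subgroup of $G$, it is a fortiori a Sylow $2$-subgroup of both $C_G(S)$ and $N_G(S)$. The $N/C$ theorem gives an embedding $N_G(S)/C_G(S) \hookrightarrow \mathrm{Aut}(S)$. Because $S \cong \ZZ_{2^m}$, its automorphism group has order $2^{m-1}$ (read as $1$ when $m \leq 1$), so $N_G(S)/C_G(S)$ has $2$-power order. On the other hand, since the full Sylow $2$-subgroup $S$ already lies in $C_G(S)$, the $2$-parts of $|N_G(S)|$ and $|C_G(S)|$ coincide, so $|N_G(S)/C_G(S)|$ is odd. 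A group that is simultaneously of $2$-power order and of odd order is trivial, whence $N_G(S) = C_G(S)$. As $S$ is abelian and central in $C_G(S) = N_G(S)$, we conclude $S \leq Z(N_G(S))$.

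With the centrality hypothesis in hand, applying Burnside's theorem yields the normal complement $N$ together with the isomorphism $G/N \cong S$, completing the argument. The main obstacle is concentrated entirely in Burnside's normal $p$-complement theorem itself, whose standard proof runs through the transfer homomorphism $G \to S$; everything preceding it is elementary counting via the $N/C$ theorem and Sylow's theorem. Since our broader aim in this section is to avoid the classification of finite simple groups rather than all classical machinery, I would simply cite Burnside's theorem at this step.
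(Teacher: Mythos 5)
Your proof is correct and takes essentially the same approach as the paper: the paper's proof consists of the single remark that the lemma is ``a direct application of Burnside's Normal Complement theorem,'' which is exactly the theorem you invoke. The only difference is that you spell out the routine $N/C$-theorem verification that a cyclic Sylow $2$-subgroup is central in its own normalizer, a step the paper leaves to the cited textbook.
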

\begin{proof}
This is a direct application of Burnside's Normal Complement theorem that can be found in most graduate level group theory texts (see \cite{Zassenhaus:1949ek} for example).
\end{proof}

\begin{proof}[Proof of Theorem \ref{THM:pqnot-qsteptype}]
($\Longleftarrow$) We show the contrapositive. Suppose $P(Q) \subseteq aQ' \neq Q'$. Since $G := Q/Q'$ is an Abelian group, $P(G) = \{bQ'\}$ such that $b^2 \in Q'$. By Lemma \ref{LEM:pq-intersects-pqnk}, $P(Q)$ intersects every element of $P(G)^{|Q'|} = \{b^{|Q'|} Q'\}$ and thus $aQ' = b^{|Q'|} Q'$. Since $aQ' \neq Q'$ and $b^2 \in Q'$, it follows that $aQ' = bQ'$ and $|Q'|$ is odd.

Since $P(G) \neq \{1Q'\}$, by Lemmas \ref{LEM:hp-abelian} and \ref{LEM:burnside} there is $N \normal G$ such that $|N|$ is odd and $G/N \cong \ZZ_{2^m}$. $N$ is a collection of coset of $Q'$. Letting $H$ be their union, we have $Q/H \cong G/N \cong \ZZ_{2^m}$ and $|H| = |N||Q'|$ is odd.

($\Longrightarrow$) Again we argue the contrapositive.
Suppose $N \normal Q$ such that $|N| = q$ is odd and $Q/N \cong \ZZ_{2^m}$ for $m \geq 1$. Since $Q/N \cong \ZZ_{2^m}$, $P(Q/N) = \{aN\} \neq \{N\}$ such that $a^2 \in N$. By Lemma \ref{LEM:pq-intersects-pqnk}, $P(Q)$ intersects every element of $P(Q/N)^{|N|} = \{aN\}^{|N|} = \{aN\}$.

Given that $Q/N$ is an Abelian group, $Q' \subseteq N$ but since $P(Q)$ intersects $aN \neq N$, it is therefore disjoint from $Q'$.
\end{proof}

\section{Theorem \ref{THM:DH-loops}}\label{SEC:DenesHermann}

An \define{$A$-loop} is a loop in which all inner mappings are automorphisms. The variety of $A$-loops is larger than that of groups but is certainly not all loops. Bruck and Paige conducted the earliest extensive study of $A$-loops \cite{Bruck:1956mz}.

Before proving Theorem \ref{THM:DH-loops}, we make several additional observations about the sets $P^k(Q)$. While none of these results will be used directly in our proof, we hope they are of some interest in that they may suggest an alternative proof of the D\'enes-Hermann theorem.

\begin{lemma}\label{LEM:pomega}
Set $\PI := \bigcup _{i=1} ^{\infty} P^i (Q)$.
\begin{enumerate}
\item $\PI \leq Q$,
\item $\PI = P^k(Q) \cup P^{k+1}(Q)$ for sufficiently large $k$,
\item If $\PI \normal Q$, then $\PI = Q'$ or $\PI = Q' \cup aQ'$ where $a^2 \in Q'$, and
\item $\PI$ is fixed by all automorphisms of $Q$. Thus, if $Q$ is an $A$-loop, then $\PI \normal Q$.
\end{enumerate}
\end{lemma}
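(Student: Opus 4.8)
The plan is to dispatch parts (i), (ii), and (iv) quickly and to concentrate on (iii).

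For (ii), Lemma~\ref{LEM:pq-properties}(\ref{LEM:pq-prop-40}) shows that the sets $P^{i}(Q)$ form two increasing chains according to the parity of $i$; since $Q$ is finite both chains stabilize, so for large $k$ we have $P^{k}(Q)=P^{k+2}(Q)$ and $P^{k+1}(Q)=P^{k+3}(Q)$, and then every $P^{j}(Q)$ lies in $P^{k}(Q)\cup P^{k+1}(Q)$ (by the chain inclusions when $j\le k+1$ and by stabilization when $j>k+1$), giving $\PI=P^{k}(Q)\cup P^{k+1}(Q)$. For (i), note $1\in P^{2}(Q)\subseteq\PI$ by (\ref{LEM:pq-prop-10}) and $\PI\cdot\PI\subseteq\PI$ by (\ref{LEM:pq-prop-20}); a finite subset $S$ of a loop that contains $1$ and is closed under multiplication is a subloop, because for $x\in S$ the injection $L_{x}$ maps the finite set $S$ into itself and hence onto itself, so $xy=z$ is solvable within $S$, and likewise for $R_{x}$. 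For (iv), any $\phi\in\mathrm{Aut}(Q)$ permutes $Q$ and therefore sends a factorization witnessing membership in $P^{k}(Q)$ to another one, so $\phi(P^{k}(Q))=P^{k}(Q)$ and $\phi(\PI)=\PI$; in an $A$-loop the inner mappings are automorphisms, so the subloop $\PI$ is inner-mapping invariant, i.e.\ normal.

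For (iii) I would first pin down the location of $\PI$. By (\ref{LEM:pq-prop-50}) the even sets lie in $Q'$, and by (\ref{LEM:pq-prop-60}) $P^{1}(Q)\subseteq aQ'$ with $a^{2}\in Q'$; since each $P^{i}(Q)$ occupies a single coset of $Q'$ by (\ref{LEM:pq-prop-30}) and $P^{2j+1}(Q)\supseteq P^{1}(Q)P^{2j}(Q)$, all odd sets lie in $aQ'$. Hence $\PI\subseteq Q'\cup aQ'$. The statement therefore reduces to proving $Q'\subseteq\PI$: once this holds, $\PI$ is a union of cosets of $Q'$ (because $xQ'\subseteq\PI\cdot\PI\subseteq\PI$ for $x\in\PI$) lying inside $Q'\cup aQ'$, so $\PI$ is exactly $Q'$ or $Q'\cup aQ'$ with $a^{2}\in Q'$.

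To obtain $Q'\subseteq\PI$ I would use normality to form $\bar Q:=Q/\PI$ and show it is an abelian group; this is equivalent to $Q'\subseteq\PI$ since $Q'$ is the least normal subloop with abelian quotient. Writing $\pi\colon Q\to\bar Q$ and $d:=|\PI|$, note that $P^{3}(Q)\subseteq\PI$ forces every full $3$-product of $Q$ to project to $\bar 1$; as $\pi$ is a surjective homomorphism with all fibres of size $d$, any arrangement of the multiset consisting of $3d$ copies of each element of $\bar Q$ lifts to such a product, so every such arrangement equals $\bar 1$. I would then run a downward ``peeling'' induction: if all arrangements of a multiset $M$ are constant, then all arrangements of any proper sub-multiset $M'$ agree, because for $h\in M\setminus M'$ and arrangements $V_{1},V_{2}$ of $M'$ one has $hV_{1}=hV_{2}$, whence $V_{1}=V_{2}$ by left cancellation. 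This produces a value $f(M')$ depending only on the sub-multiset, with $f(M_{1}\sqcup M_{2})=f(M_{1})f(M_{2})$ and $f(\{x\})=x$; comparing the two groupings of $f(\{x,y,z\})$ gives associativity and comparing $f(\{x,y\})$ with $f(\{y,x\})$ gives commutativity. Because $3d\ge 3$ these small sub-multisets exist even when $x,y,z$ coincide, so $\bar Q$ is a commutative group. The main obstacle is precisely this abelianness step, and the delicate point is multiplicity: projecting ordinary full products $P^{1}(Q)$ yields only $d$ copies of each element of $\bar Q$, too few to realize a repeated triple such as $\{x,x,x\}$ and hence too weak to force associativity at a single element, which is exactly why one must feed the peeling argument the higher power $P^{3}(Q)\subseteq\PI$. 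Everything else, together with parts (i), (ii), and (iv), is routine by comparison.
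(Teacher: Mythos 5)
Your proof is correct, and parts (i), (ii), and (iv) coincide with the paper's own treatment. Your overall strategy for (iii) — reduce to showing $Q/\PI$ is an Abelian group, conclude $Q' \subseteq \PI$ by minimality of $Q'$, then use Lemma \ref{LEM:pq-properties} to trap $\PI$ inside $Q' \cup aQ'$ and finish by a coset argument — is also the paper's strategy, but the mechanism you use for the key step (abelianness of the quotient) is genuinely different. The paper's device is the complementary full product: for $a,b,c \in Q$ it picks $a' \in P(Q\setminus\{a\})$, $b' \in P(Q\setminus\{b\})$, $c' \in P(Q\setminus\{c\})$, observes that $(ab\cdot c)\cdot(a'b'\cdot c')$ and $(a\cdot bc)\cdot(a'b'\cdot c')$ are full products of bounded multiplicity and hence lie in $\PI$, and concludes $(ab\cdot c)\PI = (a\cdot bc)\PI$ because left inverses of $(a'b'\cdot c')\PI$ in the loop $Q/\PI$ are unique; commutativity is the same trick with $ab$ and $ba$ against $a'b'$. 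You instead project $P^3(Q)$ into $Q/\PI$, note that every arrangement of the multiset of $3d$ copies of each element of the quotient collapses to the identity coset, and run a downward peeling induction via left cancellation to get a well-defined evaluation of every sub-multiset, from which associativity and commutativity both drop out. The paper's trick buys brevity; your argument buys transparency about the multiplicity bookkeeping — indeed the paper asserts its auxiliary products lie in $P(Q)$ when they actually lie in $P^2(Q)$ and $P^3(Q)$ (harmless, since only membership in $\PI$ is used, but your explicit explanation of why $P^3(Q)$ rather than $P(Q)$ must feed the argument is cleaner) — and you also spell out the concluding step, that $\PI$ is a union of cosets of $Q'$ inside $Q' \cup aQ'$, which the paper's proof leaves implicit.
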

\begin{proof}
(i) Since $Q$ is finite, we need only verify that $\PI $ is closed under multiplication. If $x,y \in \PI $, then $x \in P^i(Q)$ and $y \in P^j(Q)$ for some $i,j \geq 1$. Thus $xy \in P^{i+j}(Q) \subseteq \PI $. 

(ii) Again, since $Q$ is finite, the nested sequence $(P^{2i} (Q): 1 \leq i < \infty)$ must terminate at some step, say $P^{k_1}(Q)$. Likewise $(P^{2i + 1} (Q): 1 \leq i < \infty)$ must terminate at some step, say $P^{k_2}(Q)$. Thus letting $k = \max \{k_1, k_2\}$, we have $\PI  = P^k(Q) \cup P^{k+1}(Q)$. (In fact, by Lemma \ref{LEM:pq-properties} part (ii), the sequences terminate at the same time.)

(iii) Suppose $\PI  \normal Q$. We show that $Q / \PI $ is an Abelian group and thus $Q' \subseteq \PI $. To see that $Q/\PI $ is a group, note that $a\PI  b\PI  \cdot c\PI  = (ab \cdot c) \PI $. We would like to show that $(ab \cdot c) \PI  = (a \cdot bc) \PI $.

To that end, let $a' \in P(Q \setminus \{a\})$ and likewise for $b'$ and $c'$.  We translate both $(ab \cdot c) \PI $ and  $(a \cdot bc) \PI $ by $(a' b' \cdot c') \PI $ on the left to get
\begin{align*}
(a \cdot bc) \PI  \cdot (a' b' \cdot c') \PI  &= [(a \cdot bc) \cdot (a' b' \cdot c')] \PI  \\
(ab \cdot c) \PI  \cdot (a' b' \cdot c') \PI  &= [(ab \cdot c) \cdot (a' b' \cdot c')] \PI 
\end{align*}
Note that both $(a \cdot bc) \cdot (a' b' \cdot c')$ and $(ab \cdot c) \cdot (a' b' \cdot c')$ are elements of $P(Q)$ and thus both right-hand sides reduce to $\PI $. Thus both $(ab \cdot c) \PI $ and $(a \cdot bc) \PI $ are left inverses of $(a' b' \cdot c')\PI $. Since left inverses are unique, we have $(ab \cdot c) \PI  = (a \cdot bc) \PI $ and $Q/\PI $ is a group.

To see that $Q/\PI $ is Abelian, consider $a\PI  b\PI $ and $b\PI  a\PI $. Again let $a' \in P (Q \setminus \{a\})$ and $b' \in P (Q \setminus \{b\})$. We then have
\begin{align*}
ab \PI  \cdot a'b' \PI  &= (ab \cdot a'b') \PI  \\
ba \PI  \cdot a'b' \PI  &= (ba \cdot a'b') \PI 
\end{align*}
Since $(ab \cdot a'b')$ and $(ba \cdot a'b')$ are both members of $P(Q)$, the right-hand sides reduce to $\PI $. As above, it follows that $a\PI  b\PI  = b\PI  a\PI $.

Since $Q'$ is the smallest normal subloop of $Q$ such that $Q / Q'$ is an Abelian group, $Q' \subseteq \PI $.

(iv) First note that for $i \geq 1$, $P^i(Q)$ is always fixed by automorphisms of $Q$ and thus so is $\PI $. If $Q$ is an $A$-loop, then $\PI $ is fixed by every inner-mapping. That is, $\PI  \normal Q$.
\end{proof}

In the spirit of the observations made in Lemma \ref{LEM:pomega}, Yff showed that when $G$ is a group, $P^3(G)$ coincides with a coset of $G'$ \cite[p. 269]{Yff:1991dq}. Although this fact is an easy application of the D\'enes-Hermann theorem, his proof applies directly to all finite groups and avoids the use of the Feit-Thompson theorem.

To our knowledge, Theorem \ref{THM:DH-loops} is the first extension beyond groups of the D\'enes-Hermann theorem. Although our generalization is rather modest, we suspect the result extends almost completely.

\begin{question}\label{Question:Denes-Hermann-All}
If $|Q|$ is sufficiently large, does it follow that $P(Q)$ is a coset of $Q'$?
\end{question}

The D\'enes-Hermann theorem is equivalent to the claim that for any finite group $G$
\[
|\{g_1 \cdots g_n : \mbox{ ranging over all orderings }\}| = |G'|.
\]
To answer Question \ref{Question:Denes-Hermann-All} affirmatively, it would suffice to show the perhaps stronger claim that given any fixed ordering of the elements of $Q$, we have
\[
|\{q_1 \cdots q_n : \mbox{ ranging over all associations } \}| = |A(Q)|.
\]

The restriction on $|Q|$ in Question \ref{Question:Denes-Hermann-All} is necessary as Vojt\u{e}chovsk\'y and Wanless have observed that for $3$ of the $5$ non-associative loops of order $5$, $P(Q)$ has order $4$ whereas $Q = Q'$ (see Figure \ref{FIG:DH-counterexample} for one example) \cite{Vojtechovsky:2009jt}. At the time of this writing, we are unaware of any other loops with this property.

\begin{figure}[htbp]
\begin{center}
\begin{tabular}{c|ccccc} 
$Q$ & 1 & 2 & 3 & 4 & 5 \\ \hline 
1 & 1 & 2 & 3 & 4 & 5 \\  
2 & 2 & 1 & 4 & 5 & 3 \\ 
3 & 3 & 5 & 1 & 2 & 4 \\ 
4 & 4 & 3 & 5 & 1 & 2 \\ 
5 & 5 & 4 & 2 & 3 & 1 \\ 
\end{tabular}
\caption{Loop $Q$ for which $Q = Q'$ but $P(Q) = Q \setminus \{1\}$. As such, $Q$ demonstrates the necessity of the cardinality restriction in Question \ref{Question:Denes-Hermann-All}.}
\label{FIG:DH-counterexample}
\end{center}
\end{figure}

We recall the following special case of the correspondence and isomorphism theorems, proofs of which can be found in most standard universal algebra texts.

\begin{lemma}
If $N \normal Q$ and $N \leq H \leq Q$, then 
\begin{enumerate}
\item $H \normal Q$ if and only if $H/N \normal Q/N$ and
\item when $H \normal Q$, $Q/H \cong (Q/N) / (H/N)$.
\end{enumerate}
\end{lemma}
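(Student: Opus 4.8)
The plan is to treat this as the specialization to loops of the correspondence theorem and the third isomorphism theorem, both of which hold in any variety of algebras; since loops form a variety, the statements follow once the inner-mapping notion of normality used here is reconciled with the congruence-theoretic notion already noted to be equivalent. I would nonetheless give the argument directly, leaning on the induced-map construction $\rho \mapsto \rho_H$ introduced just before Lemma \ref{LEM:not-derangement}, since that keeps everything inside the paper's own vocabulary. Throughout I would use the fact that, because $N \leq H$ and $H$ is a subloop, $H$ is a union of cosets of $N$, so that $hN \in H/N$ holds exactly when $h \in H$.

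For part (i) the key observation I would establish first is that the inner mappings of $Q/N$ are precisely the induced inner mappings of $Q$. Concretely, $(L_x)_N = L_{xN}$ and $(R_x)_N = R_{xN}$, so the translations of $Q/N$ are induced by those of $Q$; since $\rho \mapsto \rho_N$ is a homomorphism $\Mlt(Q) \to \Mlt(Q/N)$, each generating inner mapping $L(xN,yN)$, $R(xN,yN)$, $T(xN)$ of $Q/N$ is the image of the corresponding inner mapping of $Q$, and the induced map carries $\mathrm{Inn}(Q)$ onto $\mathrm{Inn}(Q/N)$. Granting this, normality transfers both ways. If $H \normal Q$ and $\sigma = \rho_N$ is an inner mapping of $Q/N$, then $\sigma(hN) = \rho(h)N \in H/N$ since $\rho(h) \in H$; hence $H/N \normal Q/N$. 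Conversely, if $H/N \normal Q/N$, take an inner mapping $\rho$ of $Q$ and $h \in H$; then $\rho(h)N = \rho_N(hN) \in H/N$, so $\rho(h) \in H$, giving $H \normal Q$.

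For part (ii) I would define $\phi : Q/N \to Q/H$ by $\phi(xN) := xH$. Because $N \leq H$, this is well-defined (if $xN = yN$ then $y \in xN \subseteq xH$, so $xH = yH$) and is a surjective homomorphism, its kernel being exactly $H/N$. The first isomorphism theorem for loops then yields $(Q/N)/(H/N) \cong Q/H$.

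The only point genuinely requiring care — hence the main obstacle, such as it is — is the claim that the inner mappings of $Q/N$ coincide with the induced inner mappings of $Q$, i.e. that passing to the quotient is compatible with the inner-mapping definition of normality; this is what makes part (i) go through in both directions. Everything else (well-definedness of $\phi$, the homomorphism property, and the identification of its kernel) is routine coset bookkeeping of the same kind used repeatedly above, so I would record these without belaboring the verifications.
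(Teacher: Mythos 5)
Your proposal is correct, and it does strictly more than the paper: the paper offers no proof of this lemma at all, recalling it as ``a special case of the correspondence and isomorphism theorems, proofs of which can be found in most standard universal algebra texts.'' Your opening sentence --- loops form a variety, and the inner-mapping notion of normality agrees with the congruence-theoretic one, so the general theorems specialize --- is precisely the paper's route. The direct argument you then supply is a genuinely different, self-contained alternative, and it is sound: its crux, which you correctly isolate, is that the induced-map homomorphism $\rho \mapsto \rho_N$ from $\Mlt(Q)$ to $\Mlt(Q/N)$ sends the generating inner mappings $L(x,y)$, $R(x,y)$, $T(x)$ of $Q$ to the generating inner mappings $L(xN,yN)$, $R(xN,yN)$, $T(xN)$ of $Q/N$, hence maps the inner mapping group of $Q$ onto that of $Q/N$; combined with the observation that $H$ is a union of cosets of $N$ (so $\rho(h)N \in H/N$ iff $\rho(h) \in H$), both directions of (i) fall out, and (ii) follows from the first isomorphism theorem for loops applied to the surjection $xN \mapsto xH$ with kernel $H/N$. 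What your version buys is an argument entirely inside the paper's own machinery (the induced maps defined just before Lemma \ref{LEM:not-derangement}), making the paper more self-contained; what the citation buys is brevity and the generality of the variety-theoretic statement. If you do write your version out, the only verifications worth recording beyond what you state are that $H/N$ is indeed a subloop of $Q/N$ (immediate from closure under multiplication, since everything is finite) and that the image of the whole inner mapping group, not just its generators, lands in the inner mapping group of the quotient --- both one-line remarks given that $\rho \mapsto \rho_N$ is a homomorphism.
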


We employ the following lemma in our proof of Theorem \ref{THM:DH-loops}.

\begin{lemma}\label{LEM:derived-fraction}
$(Q / A(Q))' = Q' / A(Q)$.
\end{lemma}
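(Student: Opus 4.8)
The plan is to establish the two inclusions $(Q/A)' \subseteq Q'/A$ and $Q'/A \subseteq (Q/A)'$ separately, writing $A := A(Q)$ throughout. The essential tools are the correspondence and isomorphism theorem recalled immediately above, the fact noted in the introduction that $A \normal Q'$ (so that $Q'/A$ is a genuine subloop of $Q/A$), and the defining minimality of the derived subloop. Since $Q/A$ is a group, its derived subloop $(Q/A)'$ is simply the ordinary commutator subgroup, so both minimality arguments can be run on the same footing.

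For the first inclusion, I would start from $A \le Q' \le Q$ with $Q' \normal Q$. Part (i) of the correspondence lemma gives $Q'/A \normal Q/A$, and part (ii) gives $(Q/A)/(Q'/A) \cong Q/Q'$. Since $Q/Q'$ is Abelian by the definition of $Q'$, the quotient $(Q/A)/(Q'/A)$ is Abelian as well. As $(Q/A)'$ is by definition the smallest normal subloop of $Q/A$ with Abelian quotient, it follows that $(Q/A)' \subseteq Q'/A$.

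For the reverse inclusion, I would apply the correspondence theorem in the other direction: since $(Q/A)' \normal Q/A$, part (i) shows it has the form $K/A$ for a uniquely determined $K$ with $A \le K \le Q$ and $K \normal Q$. Because $Q/A$ is a group, part (ii) gives that $(Q/A)/(K/A) \cong Q/K$ is an Abelian group. Invoking the minimality of $Q'$ once more—it is the smallest normal subloop of $Q$ with Abelian quotient—yields $Q' \subseteq K$, hence $Q'/A \subseteq K/A = (Q/A)'$. Combining the two inclusions gives the claimed equality.

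I do not anticipate a serious obstacle: the statement is essentially the loop-theoretic analogue of the familiar group identity $(G/N)' = G'/N$ when $N \le G'$, and each step reduces to an application of the correspondence theorem together with the minimality clauses in the definitions of $Q'$ and $(Q/A)'$. The only point demanding a moment's care is recognizing that $(Q/A)'$ is an ordinary commutator subgroup—which holds precisely because $A$ being the associator subloop forces $Q/A$ to be a group—so that no genuinely non-associative phenomena intervene and the two minimality arguments are perfectly symmetric.
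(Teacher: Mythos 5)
Your proof is correct and takes essentially the same approach as the paper's: both inclusions follow from the correspondence/isomorphism theorem combined with the minimality clauses defining $Q'$ and $(Q/A)'$. The only cosmetic difference is that you instantiate the reverse inclusion directly at $(Q/A)'$ via its preimage $K$, whereas the paper argues over an arbitrary normal subloop $N/A$ of $Q/A$ with Abelian quotient; the content is identical.
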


\begin{proof}
Set $A := A(Q)$.
By definition, $(Q / A)'$ is the smallest normal subloop of $Q/A$ such that the factor loop is an Abelian group. Since $A \leq Q' \normal Q$, by the correspondence theorem we have $(Q'/A) \normal (Q/A)$ and $(Q/A) / (Q'/A) \cong Q/Q'$, an Abelian group. Thus $(Q/A)' \leq (Q'/A)$.

We now show $(Q'/A) \leq (Q/A)'$. Fix $N/A \normal Q/A$ such that $(Q/A)/(N/A)$ is an Abelian group. Again by the correspondence theorem, $N \normal Q$ and $Q/N \cong (Q/A) / (N/A)$. Since $Q/N$ is an Abelian group, $Q' \leq N$ and thus $Q' / A \leq N / A$. It follows that $(Q'/A) \leq (Q/A)'$

\end{proof}

\begin{proof}[Proof of Theorem \ref{THM:DH-loops}]
Let $A := A(Q)$ and $k := |A|$. Since $P(Q)$ is contained in a single coset of $Q'$, it suffices to show that $P(Q)$ intersects at least $[Q':A]$ cosets of $A$ (the maximum possible).

By Theorem \ref{THM:denes-hermann}, $P(Q/A) = \{ xA (Q/A)' \}$ such that $x^2 A \in (Q/A)'$. By Lemma \ref{LEM:derived-fraction}, $xA (Q/A)' = xA (Q' / A) = (xQ') A$. Thus we have $P(Q/A) = \{ (xQ') A \}$. By Lemma \ref{LEM:pq-intersects-pqnk}, $P(Q)$ intersects each of the $[Q' : A]$ elements of $P(Q/A)^k = \{ (x^k Q') A \} = \{ q A : q \in x^k Q' \} $.
\end{proof}

\section{Concluding Remarks}

We have proposed the HP-condition as a possible framework for extensions of Theorem \ref{THM:HP} from groups into the larger world of non-associative loops. Having shown several universal implications between the points of the HP-condition, we leave open the difficult problem of identifying interesting varieties of loops in which conditions (B) and (C) imply (A).

It would also be of interest to identify classes of loops in which the existence of a regular row transversal implies the existence of a transversal. As noted in Corollary \ref{COR:HP-equiv-reg-row-trans}, this implication in groups is fully equivalent to Theorem \ref{THM:HP}.

I thank Michael Kinyon and Petr Vojt\v{e}chovsk\'y for their helpful conversations regarding this material and Anthony Evans and Ian Wanless for sharing several articles and preprints related to the Hall-Paige conjecture. Lastly I thank the anonymous referee whose feedback clarified the historical background and helped to focus the exposition. 

\bibliographystyle{abbrv}

\end{document}